\documentclass[reqno, a4paper, 12pt]{amsart}
\usepackage{amsmath,amsfonts}
  \usepackage{paralist}
  \usepackage{graphics} 
  \usepackage{epsfig} 
  \usepackage{tikz-cd}
  \usepackage{tikz}
\usetikzlibrary{positioning, arrows.meta}

  \usepackage{xfrac}
  \usepackage{enumitem}
  \usepackage[justification=centering]{caption}
\usepackage{graphicx}  \usepackage{epstopdf}
 \usepackage[colorlinks=true]{hyperref}
\hypersetup{urlcolor=blue, citecolor=red}

  \textheight=8.2 true in
   \textwidth=5.0 true in
    \topmargin 30pt
     \setcounter{page}{1}



\newtheorem{theorem}{Theorem}[section]
\newtheorem{corollary}[theorem]{Corollary}

\newtheorem{lemma}[theorem]{Lemma}

\theoremstyle{definition}
\newtheorem{definition}[theorem]{Definition}
\newtheorem{example}[theorem]{Example}
\newtheorem{remark}[theorem]{Remark}








\begin{document}

	\title{colored link Invariants}
\author{Georgy C Luke}
	\address{Indian Institute of Science Education and Research Tirupati,
	Tirupati -517507. Andhra Pradesh, INDIA.}
\email{georgy.c@students.iisertirupati.ac.in}

\author{B. Subhash}
\address{Indian Institute of Science Education and Research Tirupati,
	Tirupati -517507. Andhra Pradesh, INDIA.}
\email{subhash@iisertirupati.ac.in}

\subjclass[2020] {Primary: 57K10, 57K12, 20F36}

\keywords{Colored links, Multi-quandles, Colored link invariants}

\maketitle






\begin{abstract}
This article presents new colored link invariants by introducing the concepts of multi-quandles and topological multi-quandles.
\end{abstract}

\section{Introduction}
An oriented colored link has at least two components, each labeled by one from a set of colors. Two such colored links are isotopic if there is an orientation preserving ambient isotopy between them, which preserves colors. The multivariable Alexander polynomial is the first known colored link invariant. In 1970, Conway introduced the potential function of a colored link determined by the Alexander polynomial, which has properties the latter doesn't have \cite{hartley1983conway}.  For a link whose components are labeled from $1$ to $n$, both the invariants give an integral $L-$polynomial in variables $t_{1},t_{2},\ldots, t_{n}$. In \cite{akutsu1992invariants}, multivariable isotopy invariants of colored oriented links from $N$-state colored braid matrices are constructed. These are generalizations of the multivariable Alexander polynomial. The colored Jones polynomial is another colored link invariant for which each of the components of the colored link is labeled with irreducible representations of the Lie algebra $sl(2,\mathbb{C}).$
In 1990, Jim Hoste and Mark E. Kidwell studied dichromatic links and introduced a chromatic skein invariant for 1-trivial dichromatic links, a particular class of dichromatic links \cite{hoste1990dichromatic}. In \cite{bataineh2020involutory} and \cite{lee2021diquandles}, the notion of dikeis and diquandles was introduced and shown that cardinality of colorings of colored links by dikeis and diquandles are dichromatic link invariants. 
 
\par The purpose of this paper is to introduce new colored link invariants. 
 In the second section, we introduce the idea of multi-quandles and show that the cardinality of colorings of colored links by multi-quandles is a colored link invariant. The third section of the article deals with the theory of colored braids and colored Markov moves. In the last section, topological multi-quandles are defined and used to construct another colored link invariant.
\section{Basic Definitions and Examples}
 \begin{definition}
\par A $quandle$ X is a set with a binary operation $\triangleright$ on X, which satisfies the following conditions.
\begin{enumerate}
 \item $x \triangleright x = x$  $\forall$ $x \in X$
 \item The map $\beta_{y} :X\rightarrow X$ defined by $\beta_{y}(x) = x \triangleright y $  for every $y \in X$ is invertible.
 \item $(x \triangleright y)  \triangleright z = (x \triangleright z ) \triangleright (y \triangleright z )  $
\end{enumerate}
 \end{definition}
 
 See \cite{elhamdadi2015quandles} for more details on quandles.
  \begin{definition}
A $topological$ $quandle$ X is a topological space with a quandle structure such that the mapping 
     \begin{align*}
  \triangleright :X \times X \rightarrow X     
\end{align*}       
 is continuous and $\beta_{y}$ is a homeomorphism for every y $\in$ X \cite{elhamdadi2016foundations}.
  \end{definition}
   \begin{example}
The space $S^{n}\subset \mathbb{R}^{n+1}$ is a topological quandle under the operation $x\triangleright y=2 \langle x,y\rangle y-x$.
 \end{example}
 \begin{definition}
A $dichromatic$ $link$ is a link whose each component is labeled by one of two colors, and both colors are used to label all the link components. Usually, the colors are represented by the labels $1$ and $2.$
   \end{definition}
   A dichromatic link is represented by a $dichromatic$ $link$ $diagram$, a classical link diagram with labels $1$ and $2$ on the components indicating the color of components of the link.
Two dichromatic links are equivalent if they are ambient isotopic and if the isotopy preserves the orientation and colors of the components \cite{lee2021diquandles}.
    \begin{definition}
   Let $\left(X,\triangleright_{1} \right) $ and $\left(X,\triangleright_{2} \right) $ be two quandles whose underlying sets are the same. Then $\left(X,\triangleright_{1},\triangleright_{2} \right) $ is called a $diquandle$ if 
   $$ (x\triangleright_{i}y)\triangleright_{j}z=(x\triangleright_{j}z)\triangleright_{i}(y\triangleright_{j}z) $$ for every $x,y,z \in X$ and $i,j\in \lbrace 1,2\rbrace.$ 
 \end{definition}
 \begin{example}
    Let  $A=Z[t_{1}^{\pm 1},t_{2}^{\pm 1}]$ and $X$ be an $A-$module. Then $(X,\triangleright_{1},\triangleright_{2})$ forms a diquandle under the quandle operations
    $$x\triangleright_{i}y=t_{i}x+(1-t_{i})y$$ for $i \in \lbrace 1,2 \rbrace.$
 \end{example}
 \begin{definition}
     A $diquandle$ $coloring$ of a dichromatic diagram $D$ by a diquandle $X$ is a function that maps the set of arcs of $D$ to X such that the relations depicted in figure \ref{figure:crossing} holds at positive and negative crossings. The set of all diquandle colorings of $D$ by $X$ is denoted by $Col_{X}(D).$
 \end{definition}
 The following result about dichromatic links is from \cite{lee2021diquandles}.
 \begin{theorem}
    Let $L$ be a dichromatic link, $D$ be it's diagram and $X$ be a diquandle. Then, the cardinality of $Col_{X}(D)$ is a dichromatic link invariant.
 
 \end{theorem}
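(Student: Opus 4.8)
The plan is to invoke the classical fact that two dichromatic link diagrams represent equivalent dichromatic links if and only if they are related by a finite sequence of \emph{color-preserving} Reidemeister moves $R1$, $R2$, $R3$ together with planar isotopy. Since planar isotopy visibly induces a bijection on colorings, it suffices to produce, for each of the three Reidemeister moves and for every admissible assignment of the colors $1,2$ to the strands involved, a bijection between the diquandle colorings of the diagram before the move and those after it. Because a coloring is pinned down on all arcs lying outside the small disk where the move is performed, such a bijection amounts to checking that, with the exterior colors held fixed, any labeling of the arcs entering the disk extends uniquely and consistently on both sides of the move. I would carry this out move by move.

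For $R1$ the strand crosses itself, so the over-arc and the under-arc belong to the same component and carry the same color $i$; the coloring relation at the new crossing reads $x \triangleright_i x$, and the idempotency axiom $x \triangleright_i x = x$ shows the outgoing arc inherits the label $x$, giving the required bijection. For $R2$ the two participating crossings share an over-strand, say colored $i$ with value $y$; passing the under-strand through the two crossings applies the map $\beta_y^{(i)}(x)=x\triangleright_i y$ followed by its inverse, so invertibility of $\beta_y$ (axiom $2$ applied to $\triangleright_i$) guarantees that the entering and exiting colors agree and that colorings correspond bijectively. In both moves only a single operation $\triangleright_i$ is active, so this is just the familiar quandle-coloring argument applied to one operation at a time.

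The crux is invariance under $R3$, and this is exactly where the \emph{mixed} compatibility condition of a diquandle is forced. Here three strands meet in three crossings; write $x$ for the value on the lowermost (under) strand and let the two over-strands carry colors $i,j$ and values $y,z$. Tracing the label of the under-strand through the two orders in which the move presents the crossings yields $(x \triangleright_i y)\triangleright_j z$ on one side and $(x \triangleright_j z)\triangleright_i (y \triangleright_j z)$ on the other. The color of the under-strand never selects an operation, so the only data affecting the computation are the over-strand colors $(i,j)$, which range over all of $\{1,2\}^2$. Requiring the two expressions to agree for every such pair is precisely the defining relation $(x\triangleright_i y)\triangleright_j z = (x\triangleright_j z)\triangleright_i(y\triangleright_j z)$ of a diquandle, and it delivers a bijection before and after $R3$ in each configuration. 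I expect the main obstacle to be purely bookkeeping: one must confirm that these four instances $(i,j)\in\{1,2\}^2$ genuinely exhaust the distinct over-strand colorings of the $R3$ tangle, so that the single axiom covers every case (with the orientation conventions of the crossing relations matched correctly to the choice of $\triangleright_i$ versus $\triangleright_i^{-1}$). Once the bijection is established for each move, $|Col_X(D)|$ is unchanged under every color-preserving Reidemeister move and is therefore a dichromatic link invariant.
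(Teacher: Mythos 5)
Your proposal is correct and takes essentially the same route as the paper: the paper also reduces invariance to a bijection of colorings under each color-preserving Reidemeister move (idempotency for R1, invertibility of $\beta_y$ for R2, and the diquandle exchange axiom $(x\triangleright_i y)\triangleright_j z=(x\triangleright_j z)\triangleright_i(y\triangleright_j z)$ for R3), proving the analogous $k$-quandle statement this way and citing \cite{lee2021diquandles} for the dichromatic case. The sign bookkeeping you defer is genuinely routine: the variants of the moves with negative crossings are covered by the mixed identities such as $(x\triangleright_i^{-1}y)\triangleright_j z=(x\triangleright_j z)\triangleright_i^{-1}(y\triangleright_j z)$, which the paper records as consequences of the diquandle axioms, so your plan closes completely.
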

   
\section{colored links}
\begin{definition}
A $k-colored$ $link$ is a link whose each component is colored by one of the $k$ colors, and all of the $k$ colors are used to color the link \cite{cimasoni2004geometric}.  
\end{definition}
A $k-colored$ $link$ can be represented by a $k-colored$ $link$ $diagram$ which is a classical link diagram with color labelings of the components denoted by entries from the set $\lbrace 1,2,\ldots,k \rbrace$. Two $k-colored$ $links$ $L$ and $L'$ are said to be $equivalent$ if there exists an ambient isotopy $h:\mathbb{R}^{3}\times \left[0,1 \right] \rightarrow \mathbb{R}^{3} $ preserving the component labels and orientation. Similar to classical knot theory, we could talk about reidemeister moves but colored ones as in figure \ref{figure:R1&R2} and figure \ref{figure:R3}.
\begin{figure}[h]
\def\svgwidth{300px}
\centering 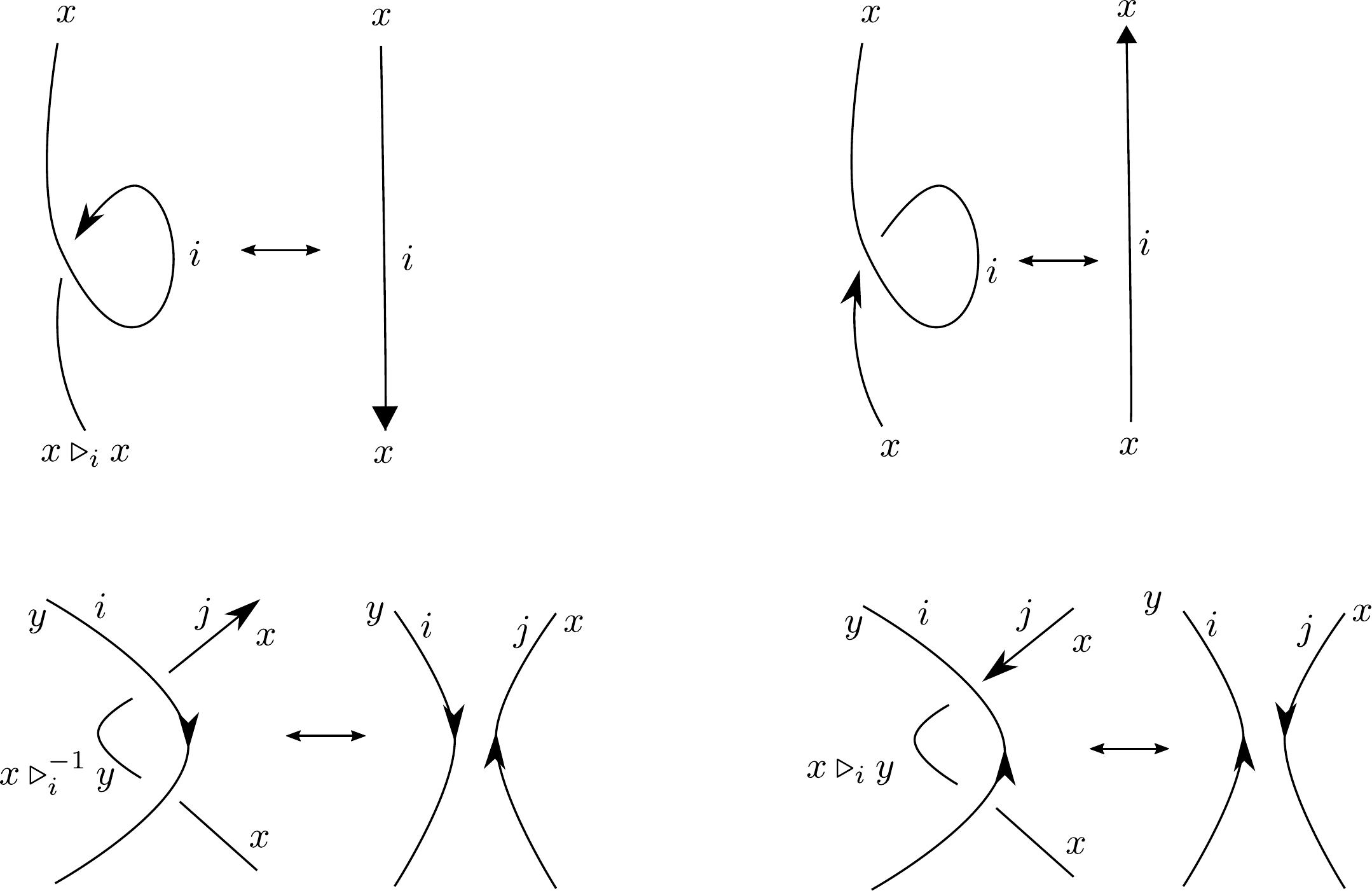
 \caption{Colored reidemeister 1 and 2 moves}
 \label{figure:R1&R2}
 \end{figure}
 \begin{figure}[h]
\def\svgwidth{300px}
\centering 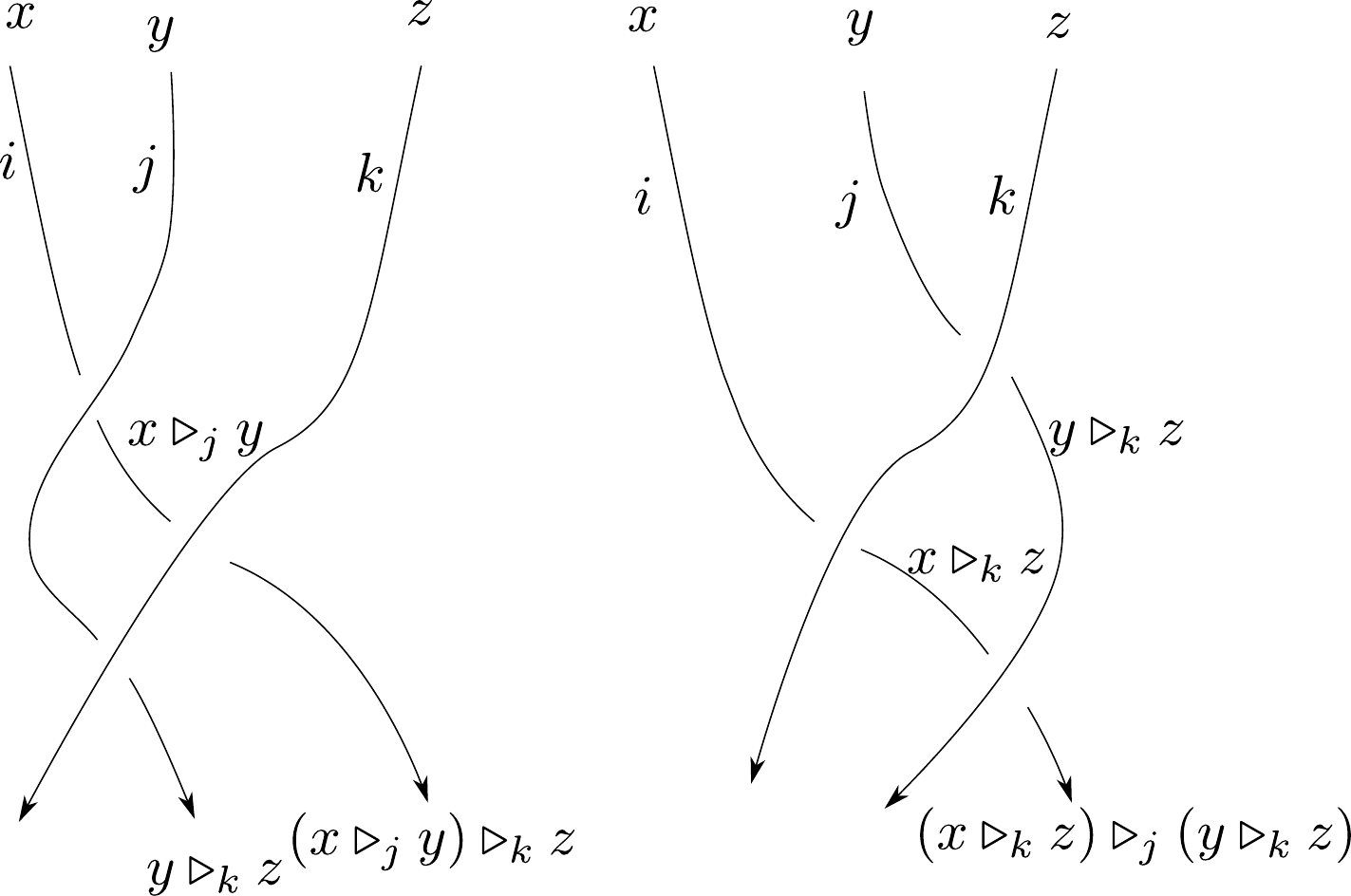
 \caption{Colored third Reidemeister move}
 \label{figure:R3}
 \end{figure}
\par The following theorem is an adaptation of the classical Reidemeister theorem in the context of colored links. A similar theorem for dichromatic links is given in \cite{lee2021diquandles}.
\begin{theorem}
Two $k$-colored links $L$ and $L'$ are equivalent if and only if their corresponding diagrams are related by colored
Reidemeister moves.
\end{theorem}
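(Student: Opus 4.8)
The plan is to deduce the colored statement from the classical Reidemeister theorem, but by carrying out the latter's proof in the presence of a color assignment rather than invoking it as a black box.

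For the \emph{if} direction I would argue directly. Each colored Reidemeister move is, by definition, a classical Reidemeister move performed on a colored diagram in which the participating strands retain their colors. Such a move is realized by an ambient isotopy of $\mathbb{R}^{3}$ supported in a small ball that meets the link only in the strands involved; since those strands keep their colors and orientations, the isotopy preserves component labels and orientation. Composing the finitely many local isotopies corresponding to a sequence of colored moves produces a single color- and orientation-preserving ambient isotopy carrying $L$ to $L'$, so $L$ and $L'$ are equivalent.

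For the \emph{only if} direction I would begin with a color- and orientation-preserving ambient isotopy $h:\mathbb{R}^{3}\times[0,1]\to\mathbb{R}^{3}$ with $h_{0}=\mathrm{id}$ and $h_{1}(L)=L'$. The key observation is that colors are locally constant on components: each component $C$ of $L$ is carried by $h$ to a component of $h_{t}(L)$, so the color of $C$ propagates continuously to a well-defined color on $h_{t}(L)$ for every $t$, and at $t=1$ it agrees with the color of the corresponding component of $L'$ because $h$ preserves labels. Thus $h$ determines a continuous path of \emph{colored} links from $L$ to $L'$. I would then run the standard proof of the Reidemeister theorem on $h$: after a small perturbation, put $h$ in general position so that the planar projection of $h_{t}(L)$ is a regular diagram except at finitely many times $t_{1}<\cdots<t_{m}$, at each of which the diagram changes by a single Reidemeister move (Figures \ref{figure:R1&R2} and \ref{figure:R3}) or a planar isotopy. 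Reading off the diagrams at regular times between consecutive critical values yields a finite sequence $D=D_{0},D_{1},\ldots,D_{m}=D'$ in which consecutive terms differ by one move. Since the color of each strand is determined by the component of $h_{t}(L)$ carrying it, and these components are tracked continuously by $h$, the colors on either side of every move match, so each move is a \emph{colored} Reidemeister move; this is the required sequence.

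The hard part is the general-position argument inherited from the classical theorem, namely guaranteeing that the projected isotopy has only finitely many non-regular moments and that each is an elementary move; the coloring introduces no new analytic content beyond checking that each such move respects the continuously tracked labels. The one genuine pitfall to avoid is the tempting shortcut of forgetting colors, applying the classical theorem, and then recoloring: an arbitrary classical sequence of moves realizes \emph{some} correspondence between the components of the underlying links, which need not be the color-preserving correspondence supplied by $h$ (for instance, an isotopy of an unlink may permute identical components). Working directly with the color-preserving isotopy $h$ is precisely what rules this out.
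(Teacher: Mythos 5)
Your proof is correct and takes essentially the approach the paper intends: the paper states this theorem with no written proof at all, remarking only that it is an adaptation of the classical Reidemeister theorem (with the dichromatic case credited to the cited literature on diquandles), and your argument is exactly that adaptation carried out, by running the general-position proof of the classical theorem on the given color- and orientation-preserving isotopy $h$ so that the colors propagate consistently through every move. Your closing caution --- that one cannot simply forget colors, invoke the classical theorem as a black box, and recolor afterwards, since the resulting move sequence may realize a component correspondence incompatible with the given labels --- is a genuine and worthwhile sharpening of the paper's terse treatment rather than a deviation from it.
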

\begin{definition}
\label{defn:k-quandle}
A $k-quandle$ is a set $X$ equipped with $k$ quandle operations, $(\triangleright_{i})_{i=1}^{k}$ satisfying the following conditions.
\begin{enumerate}
\item $(X,\triangleright_{i})$ for $i \in \lbrace 1,\ldots ,k \rbrace$ forms a quandle.
\item  $(x \triangleright_{i} y)  \triangleright_{j} z = (x \triangleright_{j} z ) \triangleright_{i} (y \triangleright_{j} z )  $ for every $i,j\in \lbrace1,\ldots,k \rbrace$ and for every $x,y,z \in X$.
\end{enumerate}
\end{definition}
We call such sets multi-quandles in general without mentioning the number of underlying quandle operations. Some properties of multi-quandles are illustrated in the following theorems, which are analogous to the case of quandles and diquandles.
\begin{theorem}
    Let $(X, \triangleright_m)_{m=1}^k$ be a $k$-quandle. Let $x,y,z \in X$ and $i,j \in \lbrace 1,2,\ldots,k \rbrace $. Then, the following statements hold.
    \begin{enumerate}
        \item  $(x \triangleright_{i} y)  \triangleright_{j}^{-1} z = (x \triangleright_{j}^{-1} z ) \triangleright_{i} (y \triangleright_{j}^{-1} z ) $
        
        \item  $(x \triangleright_{i}^{-1} y)  \triangleright_{j} z = (x \triangleright_{j} z ) \triangleright_{i}^{-1} (y \triangleright_{j} z ) $
        
        \item  $(x \triangleright_{i}^{-1} y)  \triangleright_{j}^{-1} z = (x \triangleright_{j}^{-1} z ) \triangleright_{i}^{-1} (y \triangleright_{j}^{-1} z ) $
    \end{enumerate}
\end{theorem}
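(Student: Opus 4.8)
The plan is to reduce each of the three identities to the defining relation of Definition~\ref{defn:k-quandle}(2), exploiting the fact that for every fixed $w \in X$ and every index $m$ the map $\sigma_w^{(m)} : x \mapsto x \triangleright_m w$ is a bijection; this is precisely quandle axiom (2) applied to each component operation $(X,\triangleright_m)$. Consequently $x \triangleright_m^{-1} w$ is the unique $u$ with $u \triangleright_m w = x$, and to prove an equality $P = Q$ it suffices to check $P \triangleright_m w = Q \triangleright_m w$ for a well-chosen pair $(m,w)$, since $\sigma_w^{(m)}$ is injective. For statement (1) I would apply $\sigma_z^{(j)}$ to both sides. The left-hand side collapses to $x \triangleright_i y$ because $(a \triangleright_j^{-1} z)\triangleright_j z = a$. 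For the right-hand side I invoke the defining relation with $u = x \triangleright_j^{-1} z$ and $v = y \triangleright_j^{-1} z$:
\[
(u \triangleright_i v)\triangleright_j z = (u \triangleright_j z)\triangleright_i (v \triangleright_j z) = x \triangleright_i y,
\]
the last equality using $u \triangleright_j z = x$ and $v \triangleright_j z = y$. As both sides agree after applying the injection $\sigma_z^{(j)}$, statement (1) follows.

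For statement (2) the inverse sits on the $i$-operation, so the natural map to apply is $\sigma_{y\triangleright_j z}^{(i)}$, i.e. post-composition with $-\,\triangleright_i (y \triangleright_j z)$. The right-hand side immediately becomes $x \triangleright_j z$ by the definition of $\triangleright_i^{-1}$. For the left-hand side, writing $u = x \triangleright_i^{-1} y$ and reading the defining relation from right to left,
\[
(u \triangleright_j z)\triangleright_i (y \triangleright_j z) = (u \triangleright_i y)\triangleright_j z = x \triangleright_j z,
\]
since $u \triangleright_i y = (x \triangleright_i^{-1} y)\triangleright_i y = x$. Again the two sides coincide after an injective map, proving statement (2).

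For statement (3) I would avoid repeating the bookkeeping and instead bootstrap from statement (2). Applying $\sigma_z^{(j)}$ to both sides of the claimed identity sends the left-hand side to $x \triangleright_i^{-1} y$. For the right-hand side I apply statement (2), already proven, with $x$ and $y$ replaced by $x \triangleright_j^{-1} z$ and $y \triangleright_j^{-1} z$, which rewrites $\bigl[(x \triangleright_j^{-1} z)\triangleright_i^{-1}(y \triangleright_j^{-1} z)\bigr]\triangleright_j z$ as $\bigl[(x \triangleright_j^{-1} z)\triangleright_j z\bigr]\triangleright_i^{-1}\bigl[(y \triangleright_j^{-1} z)\triangleright_j z\bigr] = x \triangleright_i^{-1} y$, so the two sides agree. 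The manipulations are routine throughout; the only thing demanding care is choosing, for each identity, which bijection $\sigma_w^{(m)}$ to apply and in which direction to read the distributivity axiom. I expect statement (2) to be the mildly delicate one, since there alone the axiom must be read backwards, whereas (1) and (3) use it in the forward direction.
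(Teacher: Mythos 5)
Your proposal is correct: each of the three identities is verified by applying a suitable bijection $\beta_w^{(m)}:x\mapsto x\triangleright_m w$ to both sides and reducing to the defining mixed-distributivity axiom, and all the individual reductions (including the backwards reading of the axiom in statement (2) and the bootstrap from (2) in statement (3)) check out. This is essentially the same substitution-and-bijectivity argument the paper relies on, since the paper gives no proof of its own but defers to the analogous diquandle results in the cited reference, where exactly this technique is used.
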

\begin{theorem}
    Let $(X, \triangleright_m)$ for $m=1,\ldots,k$ be $k$ number of quandles. Let $x,y,z \in X$ and $i,j \in \lbrace 1,2,\ldots,k \rbrace $. If
       $$(x \triangleright_{i}^{-1} y)  \triangleright_{j}^{-1} z = (x \triangleright_{j}^{-1} z ) \triangleright_{i}^{-1} (y \triangleright_{j}^{-1} z ) $$
    then the following statements hold.
    \begin{enumerate}
        \item  $(x \triangleright_{i} y)  \triangleright_{j}^{-1} z = (x \triangleright_{j}^{-1} z ) \triangleright_{i} (y \triangleright_{j}^{-1} z ) $
        
        \item  $(x \triangleright_{i}^{-1} y)  \triangleright_{j} z = (x \triangleright_{j} z ) \triangleright_{i}^{-1} (y \triangleright_{j} z ) $
        
        \item  $(x \triangleright_{i} y)  \triangleright_{j} z = (x \triangleright_{j} z ) \triangleright_{i} (y \triangleright_{j} z ) $
    \end{enumerate}
\end{theorem}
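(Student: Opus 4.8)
The plan is to read this as the exact converse of the preceding theorem: there one runs from the all-positive compatibility identity to the all-inverse one, and here I would run essentially the same substitution argument, but starting from the all-inverse identity (the hypothesis) and pushing it back to recover the mixed and the all-positive identities. The only structural tool needed throughout is that, for each fixed index $m$ and each fixed second argument $w$, the two maps $x \mapsto x \triangleright_m w$ and $x \mapsto x \triangleright_m^{-1} w$ are mutually inverse. This is available because each $(X,\triangleright_m)$ is assumed to be a quandle, so the map $\beta_w$ of quandle axiom (2) is invertible; in particular $(x \triangleright_m w) \triangleright_m^{-1} w = x$ and $(x \triangleright_m^{-1} w) \triangleright_m w = x$ for all $x,w \in X$, and these cancellations do all the real work.

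For statement (1), I would apply the hypothesis to the triple $(x \triangleright_i y,\, y,\, z)$. The outer-left factor $(x \triangleright_i y) \triangleright_i^{-1} y$ then collapses to $x$ by the cancellation above, so the left-hand side reduces to $x \triangleright_j^{-1} z$, while the right-hand side reads $((x \triangleright_i y) \triangleright_j^{-1} z) \triangleright_i^{-1} (y \triangleright_j^{-1} z)$. Applying the operation $\triangleright_i$ with second argument $y \triangleright_j^{-1} z$ to both sides cancels the outer $\triangleright_i^{-1}(y \triangleright_j^{-1} z)$, and what remains is precisely statement (1).

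For statement (2), I would instead feed the triple $(x \triangleright_j z,\, y \triangleright_j z,\, z)$ into the hypothesis. Now both inner factors $(x \triangleright_j z) \triangleright_j^{-1} z$ and $(y \triangleright_j z) \triangleright_j^{-1} z$ collapse to $x$ and $y$ respectively, so the right-hand side becomes $x \triangleright_i^{-1} y$; applying $\triangleright_j$ with second argument $z$ to both sides removes the leftover outer $\triangleright_j^{-1} z$ and yields statement (2). Statement (3) then follows from statement (1) by the same device: apply the already-established identity (1) to the triple $(x \triangleright_j z,\, y \triangleright_j z,\, z)$, collapse the two inner factors to $x$ and $y$ so that the right-hand side becomes $x \triangleright_i y$, and finish with one application of $\triangleright_j$ with second argument $z$.

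The computations are entirely routine once the substitution pattern is fixed, so the main (and only mild) obstacle is bookkeeping: choosing exactly which arguments to substitute so that precisely the intended pairs of operations cancel, and checking at each step that the map I apply to both sides is genuinely the two-sided inverse of the outer operation I wish to remove. There is no deeper difficulty here, since the invertibility of every $\beta_w$ for every operation $\triangleright_m$ is guaranteed by the quandle axioms assumed in the hypothesis.
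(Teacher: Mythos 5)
Your proof is correct, and it is essentially the approach the paper intends: the paper gives no explicit argument, deferring to the analogous theorems in the diquandle paper of Lee et al., whose proofs are exactly this substitution-and-cancellation scheme (instantiate the distributivity identity at a shifted triple, collapse the pairs $\triangleright_m$, $\triangleright_m^{-1}$ using invertibility of $\beta_w$, then apply the inverse map to both sides). Your instantiations $(x \triangleright_i y,\, y,\, z)$ for (1), $(x \triangleright_j z,\, y \triangleright_j z,\, z)$ for (2), and the reuse of (1) for (3) all check out.
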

The proofs of the above theorems are similar to analogous theorems in \cite{lee2021diquandles}.
\subsection*{Coloring of k-colored links by k-quandles}

 Let $X$ be a finite $k$-quandle. Then, the coloring of a $k$-colored link diagram is a function from the set of arcs of the diagram to $X$ such that it satisfies the crossing relations at a positive and negative crossing as in figure \ref{figure:crossing}. That is if the coloring maps the over arc to 
 $y$, the incoming arc to $x$ and the outgoing arc to $z$ at a crossing of a $k$-colored link diagram $D$, then $z$ should be equated to $x \triangleright_{j} y$ at a positive crossing and to $x \triangleright_{j}^{-1} y$ at a negative crossing, where $j$ indicates the color of the over-strand. The 
  \begin{figure}[h]
\def\svgwidth{200px}
\centering 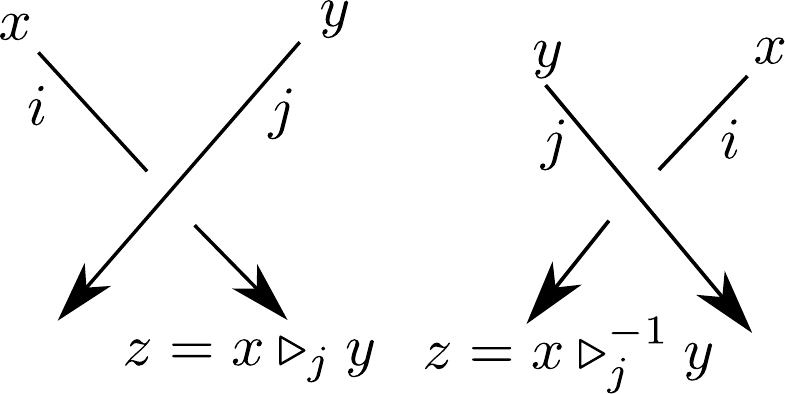
 \caption{Crossing relations}
 \label{figure:crossing}
 \end{figure}
set of colorings of a $k$-colored link diagram $D$ by a $k$-quandle $X$ is denoted by $Col_{X}(D).$  
 \begin{theorem}
Let $L$ be a $k$-colored link and $D$ be its diagram. 
Suppose $X$ is a $k$-quandle. Then, the cardinality of $Col_{X}(D)$ is a colored link invariant.
\end{theorem}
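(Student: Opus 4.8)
The plan is to reduce the statement to a local, move-by-move verification. By the colored Reidemeister theorem stated above, two $k$-colored diagrams represent equivalent links if and only if they are connected by a finite sequence of colored Reidemeister moves, so it suffices to show that a single colored Reidemeister move leaves $|Col_X(D)|$ unchanged. For each move I would fix a disk $\Delta$ containing the region where the move takes place and observe that any coloring restricts to a coloring of the part of the diagram lying outside $\Delta$. Since the two diagrams $D$ and $D'$ agree outside $\Delta$ and share the same arcs meeting $\partial\Delta$, it is enough to construct, for every assignment of colors to the boundary arcs, a bijection between the extensions across the inside of the move-region before and after the move. Gluing these local bijections over all boundary data then yields a bijection $Col_X(D)\to Col_X(D')$, and in particular equal cardinalities.

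First I would treat the first two moves. For the colored first move the strand passes over itself, so the over-arc and under-arc belong to the same component and carry the same color, say $j$; the crossing relation reads $x\triangleright_j x$ (or $x\triangleright_j^{-1}x$ at a negative crossing), and quandle axiom (1) gives $x\triangleright_j x=x$, so the extra crossing imposes no new condition and the extension is unique. For the second move the two new crossings share a single over-strand of color $j$; reading the two relations shows that the internal arc is forced to equal $x\triangleright_j^{\pm1}y$ and that the second crossing returns the original color of the under-strand, so the invertibility of $\beta_y$ (quandle axiom (2)) makes the extension exist and be unique. Both orientations and both crossing signs are handled uniformly by invoking the identities relating $\triangleright_i$ and $\triangleright_i^{-1}$ in the two preceding theorems.

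The heart of the argument, and the step I expect to be the main obstacle, is the colored third move. Here three strands are involved; in the standard configuration one strand lies entirely over, one in the middle, and one entirely under, so only the two upper strands ever act as over-strands. Denoting their colors by $i$ and $j$ and labelling the three incoming arcs $x,y,z$, I would compute the colors of all internal and outgoing arcs on each side of the move by repeatedly applying the crossing rule. The comparison of the two outputs reduces exactly to
\begin{align*}
(x\triangleright_i y)\triangleright_j z=(x\triangleright_j z)\triangleright_i(y\triangleright_j z),
\end{align*}
which is precisely the compatibility axiom (2) in the definition of a $k$-quandle. The genuine difficulty is the bookkeeping: there are many colored and oriented variants of the third move, and at negative crossings the relation involves $\triangleright_i^{-1}$ or $\triangleright_j^{-1}$. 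However, the two preceding theorems show that every mixed-sign version of this identity follows from the $k$-quandle axioms, so each variant collapses to the displayed equality and the local map is a bijection.

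Finally I would assemble the three cases. Since each colored Reidemeister move induces a bijection on colorings, so does any finite sequence of such moves, and therefore $|Col_X(D)|$ depends only on the colored link $L$ and not on the chosen diagram $D$. This exhibits $|Col_X(D)|$ as a colored link invariant, completing the proof.
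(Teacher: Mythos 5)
Your proposal is correct and takes essentially the same approach as the paper: the paper's (far terser) proof likewise argues that each coloring extends uniquely across every colored Reidemeister move, so the moves induce bijections on colorings and the cardinality is unchanged. Your write-up merely makes explicit the local checks the paper delegates to its figures --- R1 via $x\triangleright_j x=x$, R2 via invertibility of $\beta_y$, and R3 via the $k$-quandle compatibility axiom together with the mixed-sign identities of the two preceding theorems.
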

\begin{proof}
See figures \ref{figure:R1&R2} and \ref{figure:R3}. For a diagram $D$, there exists a unique coloring by $X$ before and after the colored reidemeister moves. So the cardinality of $Col_{X}(D)$ remains unchanged before and after the colored reidemeister moves.
\end{proof}
 \begin{example}
 Let $X=\lbrace1,2,3,4,5\rbrace$ be a $3-quandle$ with presentation matrix as follows.
 \begin{center}
$M_{X}= \left[\begin{array}{c c c c c | c c c c c | c c c c c}
 1&4&5&5&4&1&1&1&5&4&1&4&5&1&1\\
 3&2&2&3&3&3&2&2&3&3&2&2&2&2&2\\
 2&3&3&2&2&2&3&3&2&2&3&3&3&3&3\\
 5&5&1&4&1&5&4&4&4&1&4&5&1&4&4\\
 4&1&4&1&5&4&5&5&1&5&5&1&4&5&5
 \end{array}\right]$
 \end{center}
 \end{example}  
 The first, second, and third block of the matrix corresponds to the operation table of $\triangleright_{1}, \triangleright_{2}$ and $\triangleright_{3}$, respectively. The inverse of $M_{X}$, $M_{X}^{-1}$ is given below.
  \begin{center}
$M_{X}^{-1}= \left[\begin{array}{c c c c c | c c c c c | c c c c c}
 1&5&4&5&4&1&1&1&5&4&1&5&4&1&1\\
 3&2&2&3&3&3&2&2&3&3&2&2&2&2&2\\
 2&3&3&2&2&2&3&3&2&2&3&3&3&3&3\\
 5&1&5&4&1&5&4&4&4&1&4&1&5&4&4\\
 4&4&1&1&5&4&5&5&1&5&5&4&1&5&5
 \end{array}\right]$
 \end{center}
 The $i^{th}$ block corresponds to the operation table of $\triangleright_{i}^{-1}$ for $i=1,2,3$. In figure \ref{figure:tricolored links}, you can see the tricolored links, which include the colored Borromean ring $\left( 6^{3}_{2}\right) $ and two links with different colorings but the same underlying link structure $\overline{\sigma_{1}^{-2}\sigma_{2}^{2}}$.

 \begin{figure}[h]
\def\svgwidth{200px}
\centering 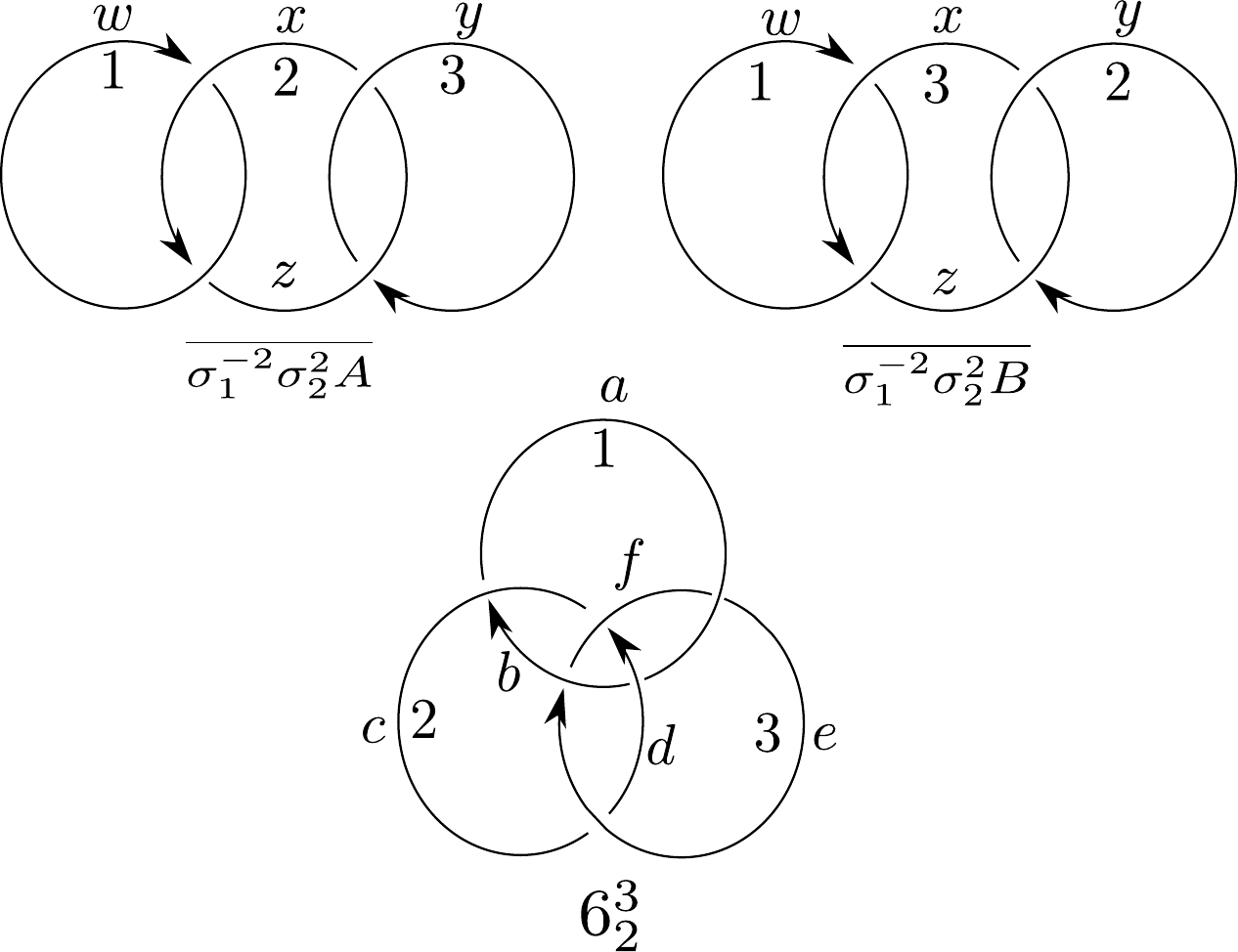
 \caption{Tricolored links}
 \label{figure:tricolored links}
 \end{figure}
 The coloring of the arcs of the links in figure \ref{figure:tricolored links} are given by the following equations.
 \begin{equation*}
  \begin{split}
  Col_{X}\left( \overline{\sigma_{1}^{-2}\sigma_{2}^{2}}A\right)=\lbrace \left(w,x,y,z \right)\in X\mid w=w \triangleright_{2} x,z=x\triangleright_{1}w,\\x=z\triangleright_{3}y,y=y\triangleright_{2}z\rbrace \\
  Col_{X}\left( \overline{\sigma_{1}^{-2}\sigma_{2}^{2}}B\right)=\lbrace \left(w,x,y,z \right)\in X\mid w=w \triangleright_{3} x,z=x\triangleright_{1}w,\\
  x=z\triangleright_{2}y,y=y\triangleright_{3}z\rbrace \\  
  Col_{X}\left(6 ^{3}_{2}\right)= \lbrace \left(a,b,c,d,e,f \right)\in X\mid b=a \triangleright_{2}c,b=a\triangleright_{2}d,\\c=d\triangleright_{3}f, 
   c=d\triangleright_{3}e,e=f\triangleright_{1}a,e=f\triangleright_{1}b \rbrace\\
\end{split}
\end{equation*}
Then \[
\left| \text{Col}_X \left( \overline{\sigma_1^{-2}\sigma_2^2 A} \right) \right| = 23, \,
\left| \text{Col}_X \left( \overline{\sigma_1^{-2}\sigma_2^2 B} \right) \right| = 29 \, \text{and}
\left| \text{Col}_X \left( 6^3_2 \right) \right| = 71.
\]

Note that $\overline{\sigma_1^{-2}\sigma_2^2 A}$ and $\overline{\sigma_1^{-2}\sigma_2^2 B}$ are distinguished by the cardinality of colorings by $X$ even though both the colored links have the same underlying link structure.

\begin{example}
    Consider the links given in figure \ref{fig:L9n27} and \ref{fig:L10n107}. The multivariable Alexander polynomial evaluates to zero for both links \cite{knotatlas}. We consider all the $3$-colored links with underlying link structures of $L9n27$ and $L10n107.$ We list the cardinality of colorings of the aforementioned colored links by $X$ in the following table. The colored links with the link structure $L9n27$ and $L10n107$ are represented by the tuples $(i,j,k)$ and $(i,j,k,l)$ respectively, where each entry of the tuples indicates the color of the component which is labeled by same entry in figure \ref{fig:L9n27} and \ref{fig:L10n107}. 

 \begin{figure}
\centering
\begin{minipage}{.5\textwidth}
  \centering
  \includegraphics[width=.4\linewidth]{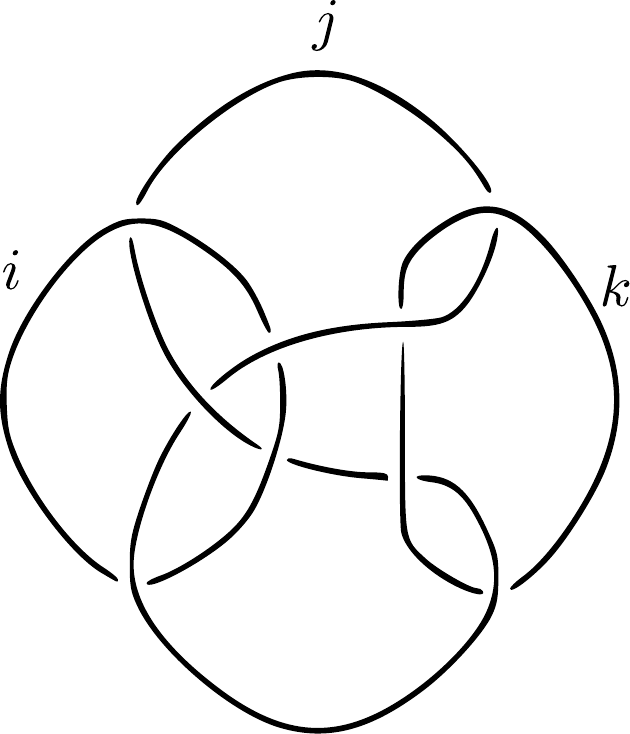}
  \captionof{figure}{L9n27}
  \label{fig:L9n27}
\end{minipage}%
\begin{minipage}{.5\textwidth}
  \centering
  \includegraphics[width=.4\linewidth]{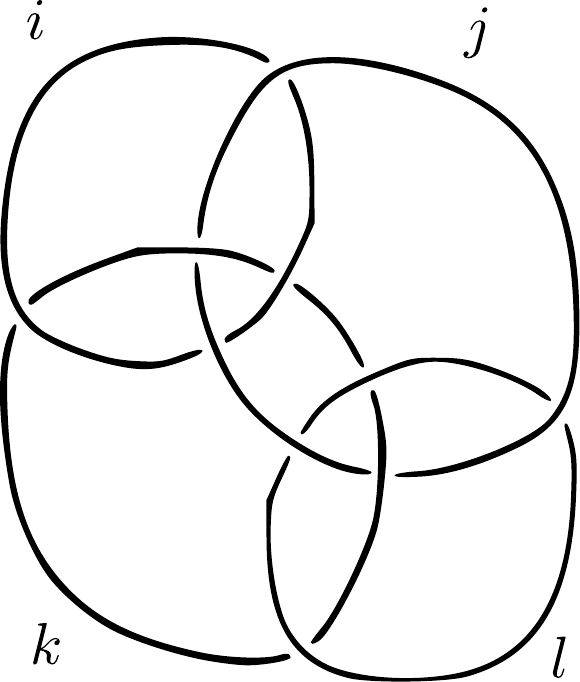}
  \captionof{figure}{L10n107}
  \label{fig:L10n107}
\end{minipage}
\end{figure}
\begin{center}
\label{L9n27}
\begin{tabular}{|c|c|c|c|c|c|}
\hline 
\multicolumn{6}{|c|}{Colored links with underlying link structure L10n107} \\ 
\hline 
D & $(1,1,2,3)$ & $(1,2,2,3)$ & $(1,3,2,3)$ & $(1,1,3,2)$ & $(1,3,3,2)$  \\ 
\hline 
$\vert Col_{X}(D)\vert$ & 307 & 433 & 337 & 265 & 481\\ 
\hline 
D & $(1,2,3,2)$ & $(2,2,1,3)$ &$(2,1,1,3)$  & $(2,3,1,3)$ & $(3,3,2,1)$ \\ 
\hline 
$\vert Col_{X}(D)\vert$ & 337 & 307 & 181 & 283 & 337 \\
\hline
D & $(3,2,2,1)$ & $(3,1,2,1)$ &$(3,3,1,2)$ & $(3,1,1,2)$ & $(3,2,1,2)$ \\ 
\hline
 $\vert Col_{X}(D)\vert$& 433 & 307& 283 & 181 & 307 \\
 \hline
 D&(2,2,3,1)&(2,3,3,1)&\multicolumn{3}{c|}{(2,1,3,1)}\\
 \hline
 $\vert Col_{X}(D)\vert$& 337& 481& \multicolumn{3}{c|}{265}\\
 \hline
\end{tabular} 
\end{center}
\begin{center}
\label{table:L10n107}
\begin{tabular}{|c|c|c|c|c|c|c|}
\hline 
\multicolumn{7}{|c|}{Colored links with underlying link structure L9n27} \\ 
\hline 
D & $(1,2,3)$ & $(1,3,2)$ & $(2,1,3)$ & $(3,2,1)$ & $(3,1,2)$ & $(2,3,1)$ \\ 
\hline 
$\vert Col_{X}(D) \vert$ & 77 & 125 & 95 & 77 & 95 & 125 \\ 
\hline 
\end{tabular} 
\end{center}
\end{example}
   \section{Multicolored braids}  
This section is motivated from \cite{kaul1994chern}.
 \subsection*{Multicolored braids}
 \begin{definition}
 A $k-colored$ braid is an $n$-braid whose each strand is colored by one of $k$ colors, and all the $k$ colors are used for coloring the braid.
 \end{definition}
  Throughout the rest of the article, we assume the braids are oriented from top to bottom unless otherwise mentioned. The colors of the braid are denoted by elements of the set $\lbrace1,2,\ldots,k\rbrace$. Two colored braids $b$ and $b'$ are said to be $isotopic$ if they are isotopic in the sense of braids by a map $F:b \times I \rightarrow \mathbb{R}^{2} \times I$ and $F$ preserve the colorings and orientations of $b$ and $b'.$ The $colored$ $braid$ $diagram$ of a colored braid is the braid diagram with labels of the colorings of the strings.  Two colored braid diagrams are said to be $diagrammatically$ $isotopic$ if there is an isotopy of underlying braid diagrams, which preserves the colorings and orientations of the colored braid diagrams. 
 The colored braid Reidemeister moves are the Reidemeister 2 and 3 moves given in figure \ref{figure:braid moves}.
 \begin{figure}[h]
\def\svgwidth{200px}
\centering 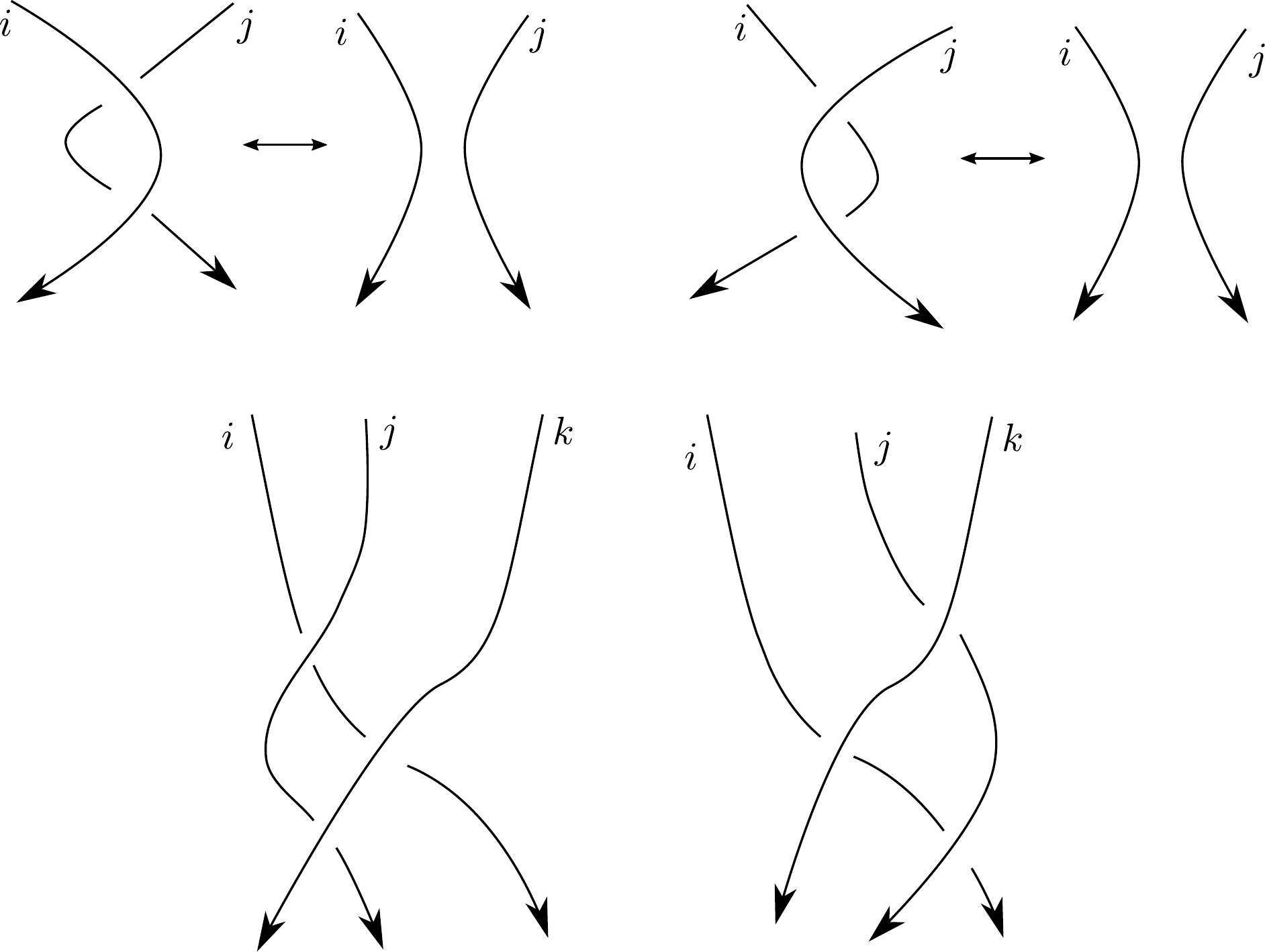
 \caption{Colored braid Reidemeister moves}
 \label{figure:braid moves}
 \end{figure}
 
 Two colored braid diagrams $D$ and $D'$ are said to be $R-equivalent$ if they are related by colored braid Reidemeister moves and diagram isotopy.\\
 The category $\mathcal{B}_{n}^{k}$ of $k$-colored $n$-braids is defined as follows: The objects are $n$-tuples $v=(v_{1},v_{2},\ldots,v_{n})$ with $v_{i}\in \lbrace 1,2,\ldots,k \rbrace$ and the set of morphisms $\textbf{Mor}(v,w)$  are the set of colored braids whose tuple of colors at top and bottom ordered from left to right are $v$ and $w$ respectively. The composition map $\textbf{Mor}(u,v) \times \textbf{Mor}(v,w) \rightarrow \textbf{Mor}(u,w)$ is the colored braid obtained by stacking the second braid below the first one. The composition is associative, and the trivial braid with coloring $v$ acts as an identity morphism of $v.$
 
 \begin{theorem}
 Two $k$-colored braid diagrams represent isotopic braids if and only if the corresponding diagrams are $R$-equivalent.
 \end{theorem}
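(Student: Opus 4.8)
The plan is to prove the two implications separately, mirroring the proof of the colored Reidemeister theorem for links stated earlier, but now keeping track of the fact that braids are monotone (each strand is transverse to the horizontal foliation of the braid box), which is what rules out the first Reidemeister move and leaves only the colored R2 and R3 moves of figure \ref{figure:braid moves}.

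For the easy direction, suppose $D$ and $D'$ are $R$-equivalent. Every colored braid Reidemeister move is supported in a small disk and is realized by an ambient isotopy of $\mathbb{R}^2 \times I$ that is the identity outside that disk; since the move leaves the strand labels and orientations untouched, this isotopy preserves colors and orientations. A diagram isotopy is likewise realized by an ambient isotopy preserving the combinatorial data. Composing the finitely many isotopies coming from the sequence of moves relating $D$ to $D'$ yields an isotopy of colored braids, so the underlying braids are isotopic.

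For the converse, suppose the colored braids $b$ and $b'$ given by $D$ and $D'$ are isotopic via a color- and orientation-preserving map $F : b \times I \to \mathbb{R}^2 \times I$. First I would perturb $F$ to general position so that, at all but finitely many times $t \in [0,1]$, the projected diagram is generic (only transverse double points), and at the finitely many exceptional times exactly one standard codimension-one degeneracy occurs. Because each intermediate stage is again a braid, the strands stay monotone in the vertical direction, so the degeneracy creating a local maximum or minimum (which is what forces an R1 move) never appears; only a tangency of two strands (R2) or a triple point (R3) can occur. Reading off the resulting movie expresses the passage from $D$ to $D'$ as a finite sequence of R2 and R3 moves interspersed with planar diagram isotopies. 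Since $F$ preserves the coloring throughout, at every exceptional time the strands involved carry well-defined, unchanged colors, so each move is a \emph{colored} braid Reidemeister move; hence $D$ and $D'$ are $R$-equivalent.

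The main obstacle is the general-position step in the converse: one must justify that a braid isotopy can be approximated by one whose projection is generic and monotone at every instant, so that the only singularities are tangencies and triple points and no local extrema are introduced. This is the braid-theoretic refinement of the classical Reidemeister argument; once it is in place, propagating the color labels through the movie is automatic from the continuity and color-preservation of $F$.
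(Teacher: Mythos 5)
Your proposal is correct and takes essentially the same route as the paper: the paper's proof simply invokes the classical Reidemeister theorem for braid diagrams (Section 1.2.3 of \cite{kassel2008braid}) and notes that the argument carries over when colors are attached, which is precisely what you sketch --- monotonicity of strands ruling out R1, general position yielding only tangencies (R2) and triple points (R3), and the coloring propagated through the movie by the color-preserving isotopy. The only difference is that you spell out the general-position argument that the paper delegates entirely to the citation.
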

 \begin{proof}
 The proof is similar to the analogous theorem of braids from section $1.2.3$ of \cite{kassel2008braid}, except in this case, the braids are colored.
 \end{proof}
  We denote a $k$-colored $n$-braid diagram by ${}_w^v\sigma$, where the color of the strands at the top and bottom ends from left to right of the braid are denoted by $v,w\in\lbrace 1,2,\ldots,k \rbrace^{n}$ respectively. The $k$-colored elementary $n$-braid diagrams for ${}^v_w\sigma_{i}$ and ${}^v_w\sigma_{i}^{-1}$ for $i \in \lbrace 1,\ldots,n-1 \rbrace$ are illustrated in figure \ref{figure:elementary braids }, where $v=\left( v_{m}\right)_{m=1}^{n} $ and $w=\left( w_{m}\right)_{m=1}^{n} $.  Note that $w_{j}=v_{j}$ for $j\neq i,i+1$, $w_{i}=v_{i+1}$ and $w_{i+1}=v_{i}.$
  \begin{figure}[h]
\def\svgwidth{300px}
\centering 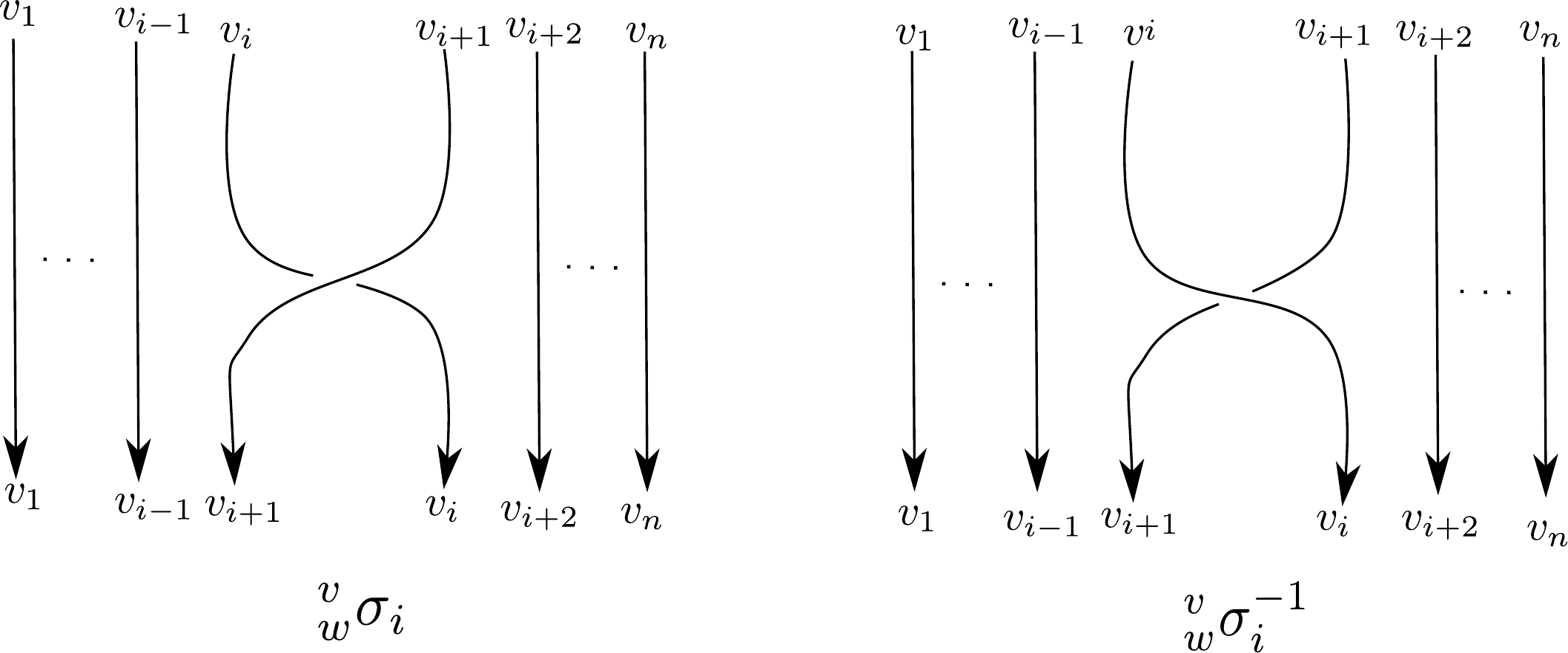
 \caption{Colored elementary braids}
 \label{figure:elementary braids }
 \end{figure}
 Any colored braid diagram can be drawn as a diagram whose crossings have distinct second coordinates. Therefore, any colored braid diagram can be obtained by stacking one colored elementary braid diagram below the other. So a braid diagram ${}^{v^{1}}_{v^{m+1}}\sigma$ can be written as the product of elementary braids as 
 ${}^{v^{1}}_{v^{2}}\sigma_{i_{1}}^{\pm}$ ${}^{v^{2}}_{v^{3}}\sigma_{i_{2}}^{\pm}\ldots {}^{v^{m}}_{v^{m+1}}\sigma_{i_{m}}^{\pm}$ where
  $v^{l}$ for $l\in \lbrace 1,2,\ldots,m+1\rbrace$ is the $n$-tuple indicating the color at the ends of the braid.
\par Now we define the category $\mathcal{D}_{n}^{k}$ of $R$-equivalent $k$-colored $n$-braid diagrams. The objects are the set of $n$-tuples $v=(v_{1},v_{2},\ldots,v_{n})$ with $v_{i}\in \lbrace 1,2,\ldots,k \rbrace$ and the set of morphisms $\textbf{Mor}(v,w)$  are the set of $R$-equivalent colored braid diagrams whose tuple of colors at the top and bottom ordered from left to right is $v$ and $w$ respectively. The composition of morphisms is a product of braid diagrams, and it's associative. The identity morphism of $v$ is the trivial braid diagram with coloring $v.$ 
 \par Similar to Artin's braid theory $\mathcal{D}_{n}^{k}$ satisfy the following relations \cite{kaul1994chern}. 
 \begin{equation*}
 \begin{split}
     {}^{v_{1}}_{v_{2}}\sigma_{i}\:{}^{v_{2}}_{v_{3}}\sigma_{i+1}\:{}^{v_{3}}_{v_{4}}\sigma_{i} &={}^{v_{1}}_{v_{2}}\sigma_{i+1}\:{}^{v_{2}}_{v_{3}}\sigma_{i}\:{}^{v_{3}}_{v_{4}}\sigma_{i+1}\\
   {}^{v_{1}}_{v_{2}}\sigma_{i}\:{}^{v_{2}}_{v_{3}}\sigma_{j} &={}^{v_{1}}_{v_{2}}\sigma_{j}\:{}^{v_{2}}_{v_{3}}\sigma_{i} 
   \end{split}
 \end{equation*}
The first and second equations follow from $3^{rd}$ colored braid Reidemeister move and diagram isotopy of braid diagrams, respectively. The natural covariant functor from $\mathcal{D}_{n}^{k}$ to $\mathcal{B}_{n}^{k}$ mapping $R$-equivalent braid diagram to the corresponding isotopic class of braids is an isomorphism.
 
 \begin{theorem}
 The category $\mathcal{D}_{n}^{k}$ is a groupoid.
 \end{theorem}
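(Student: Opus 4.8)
The plan is to show that every morphism in $\mathcal{D}_n^k$ admits a two-sided inverse; since the excerpt already establishes that $\mathcal{D}_n^k$ is a category (associative composition, with identities given by trivial diagrams), this is all that remains. The strategy is to reduce to elementary braids, invert those by hand using the colored Reidemeister 2 move, and then assemble the inverse of a general diagram by the usual reversal-of-word argument.

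First I would treat the base case. A colored elementary braid ${}^{v}_{w}\sigma_{i}\in\textbf{Mor}(v,w)$, with $w_i=v_{i+1}$, $w_{i+1}=v_i$ and $w_j=v_j$ otherwise, has the evident candidate inverse ${}^{w}_{v}\sigma_{i}^{-1}\in\textbf{Mor}(w,v)$. I would verify that stacking ${}^{v}_{w}\sigma_{i}$ above ${}^{w}_{v}\sigma_{i}^{-1}$ produces a diagram from $v$ to $v$ that is $R$-equivalent, via the colored Reidemeister 2 move of figure \ref{figure:braid moves}, to the trivial diagram ${}^{v}_{v}1$; stacking them in the other order likewise yields ${}^{w}_{w}1$ by the same move. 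The one point requiring attention here is the bookkeeping of colors: one must check that the color labels along the cancelling pair of crossings match exactly, so that the colored R2 move applies, and that the resulting strandwise colors are precisely those of the trivial diagram on $v$ (respectively $w$). The identical verification, with the roles of $\sigma_i$ and $\sigma_i^{-1}$ exchanged, shows that $\sigma_i^{-1}$ is invertible as well.

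Next I would invoke the factorization established just above the statement: any colored braid diagram with at least one crossing can be written as a composite of colored elementary braids ${}^{v^{1}}_{v^{2}}\sigma_{i_{1}}^{\pm}\,{}^{v^{2}}_{v^{3}}\sigma_{i_{2}}^{\pm}\cdots{}^{v^{m}}_{v^{m+1}}\sigma_{i_{m}}^{\pm}$, while a diagram with no crossings is a trivial diagram, i.e. an identity morphism, which is its own inverse. For a morphism $f=\sigma^{(1)}\sigma^{(2)}\cdots\sigma^{(m)}$ written in this form, I would set $g=(\sigma^{(m)})^{-1}\cdots(\sigma^{(2)})^{-1}(\sigma^{(1)})^{-1}$, where each $(\sigma^{(j)})^{-1}$ is the elementary inverse produced in the base case. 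Using associativity of composition and cancelling the innermost pair $\sigma^{(m)}(\sigma^{(m)})^{-1}$ by the base case, then the next pair, and so on, the composite $f\,g$ telescopes to the identity ${}^{v^{1}}_{v^{1}}1$ on the top coloring of $f$; symmetrically $g\,f$ telescopes to the identity ${}^{v^{m+1}}_{v^{m+1}}1$ on the bottom coloring. Hence $g$ is a two-sided inverse of $f$.

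This completes the argument: every morphism of $\mathcal{D}_n^k$ is invertible, so $\mathcal{D}_n^k$ is a groupoid. The only genuinely non-formal step is the base-case cancellation, where the colored R2 move must be applied with the correct color labels; everything else is the standard reversal-of-word telescoping that is valid in any category once the generators are known to be invertible. I would also note that invertibility could alternatively be deduced from the stated isomorphism $\mathcal{D}_n^k\cong\mathcal{B}_n^k$ together with the fact that colored braids are invertible under stacking, but the direct diagrammatic argument above is self-contained.
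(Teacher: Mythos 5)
Your proof is correct and takes essentially the same route as the paper: factor an arbitrary morphism into colored elementary braid diagrams ${}^{v^{1}}_{v^{2}}\sigma_{i_{1}}^{\pm}\cdots{}^{v^{m}}_{v^{m+1}}\sigma_{i_{m}}^{\pm}$ and exhibit its inverse as the reversed word with all crossing signs flipped. The paper simply asserts that this reversed word is the inverse, whereas you additionally spell out the colored Reidemeister 2 cancellation for elementary braids and the telescoping argument; this fills in detail the paper leaves implicit rather than constituting a different argument.
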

 \begin{proof}
To show $\mathcal{D}_{n}^{k}$ is a groupoid, it's enough to show that every morphism is an isomorphism in $\mathcal{D}_{n}^{k}$. Suppose ${}^{v^{1}}_{v^{m+1}}\sigma \in \textbf{Mor}(v^{1},v^{m+1}).$ Then by the explanation above ${}^{v^{1}}_{v^{m+1}}\sigma={}^{v^{1}}_{v^{2}}\sigma_{i_{1}}^{\pm}$  $ {}^{v^{2}}_{v^{3}}\sigma_{i_{2}}^{\pm}\ldots {}^{v^{m}}_{v^{m+1}}\sigma_{i_{m}}^{\pm}$ and it has an inverse of the form 
${}^{v^{m+1}}_{v^{m}}\sigma_{i_{m}}^{\mp}\ldots{}^{v^{3}}_{v^{2}}\sigma_{i_{2}}^{\mp}$ 
 ${}^{v^{2}}_{v^{1}}\sigma_{i_{1}}^{\mp} \in \textbf{Mor}(v^{m+1},v^{1}) $. 
 \end{proof}
 Similarly the category $\mathcal{B}_{n}^{k}$ of $k$-colored $n$-braids is a groupoid.
 Now consider the set of isotopic classes of $k$-colored $n$-braids of the form ${}^v_v\sigma$, that is, those colored braids whose color at the top end of each string coincides with the same position at the bottom. This forms a subgroupoid of the groupoid of isotopic classes of $k$-colored $n$-braids. The $closure$ $of$ $a$ $colored$ $braid$ belonging to this subgroupoid is obtained by joining the top end of the string with the same position at the bottom of the braid. We denote the closure of the colored braid ${}^v_v\sigma$ by $\overline{{}^v_v\sigma}$ which is a colored link.
 \begin{theorem}
 Alexander's theorem for multicolored braids\cite{kaul1994chern}:\\ Given any $k$-colored link $L_{C}$ there exists a $k$-colored braid whose closure is $L_{C}$.
 \end{theorem}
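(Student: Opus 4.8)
The plan is to reduce the statement to the classical Alexander theorem by observing that every standard braiding procedure is realized by color-preserving ambient isotopies. I would begin with a chosen diagram $D$ of the $k$-colored link $L_C$, equipped with its orientation and with each arc carrying the color of the component to which it belongs.

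First I would temporarily forget the colors and apply a concrete braiding algorithm to the underlying oriented link diagram. Either Alexander's original argument works (fix a braid-axis point $p$ in the plane, call an arc \emph{good} if it winds around $p$ in the chosen direction and \emph{bad} otherwise, and successively eliminate bad subarcs by sliding them across $p$, at the cost of introducing a pair of new crossings), or one may run the Yamada--Vogel algorithm (compute the Seifert circles and remove incoherently nested pairs by Reidemeister-II-type reducing moves until all Seifert circles are coherently nested). In either version the process terminates after finitely many steps in a diagram that is visibly the closure of a braid.

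The point that makes this transfer to the colored category is that each step of the algorithm is an ambient isotopy of the link: a planar isotopy together with, in Alexander's version, the slide of a single subarc, or, in Vogel's version, a move supported on a small disk. Such an isotopy $h_t$ with $h_0=\mathrm{id}$ restricts at each time to a homeomorphism, hence carries each connected component of the link to a single component of its image; since $h_0=\mathrm{id}$, component $i$ is tracked to a well-defined component of the final diagram. Because no step ever merges or splits components, and orientations are preserved, each arc retains the color of its component throughout. Thus at every stage we have a $k$-colored diagram equivalent to $L_C$ as a colored link, and the final diagram is a colored braid closure.

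Finally I would observe that the closure operation reattaches the top of each strand to the bottom in the same horizontal position, so the colors that are joined agree position by position; the resulting braid $\sigma$ therefore has matching top and bottom color tuples, i.e. it is of the form ${}^v_v\sigma$, lies in the subgroupoid whose closures are colored links, and its closure is $L_C$ by construction. The only step requiring genuine care is confirming that the braiding moves act as honest isotopies of the \emph{colored} link rather than as recolorings; this is the conceptual, rather than computational, obstacle. Once colors are viewed as constants attached to components and each move as an ambient isotopy fixing the set of components, this verification is automatic, and no combinatorics beyond the classical algorithm is needed.
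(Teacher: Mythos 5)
Your proof is correct and follows essentially the same route as the paper: forget the colors, invoke the classical Alexander theorem on the underlying link, and re-color the resulting braid by the colors of the corresponding components, noting that the closure forces matching top and bottom color tuples so the braid has the form ${}^v_v\sigma$. The paper compresses into one sentence the color-tracking argument that you spell out via ambient isotopies, but the underlying idea is identical.
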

 \begin{proof}
 Let $L$ be the underlying link obtained by neglecting the colors of $L_{C}.$ Then, by Alexander's theorem for classical braids, there exists a braid $\sigma$ whose closure is $L.$ By giving the strands of $\sigma$ the color of corresponding components of $L_{C}$, we get a $k$-colored braid of the form ${}^v_v\sigma$ whose closure is $L_{C}.$ 
 \end{proof}
\subsection*{Colored Markov Moves} \hfill \\
The colored Markov moves are defined as follows:\\
  1. Stabilisation\\
 The stabilisation move is illustrated in figure \ref{figure:stabilisation}. 
\begin{figure}[h]
\def\svgwidth{200px}
\centering 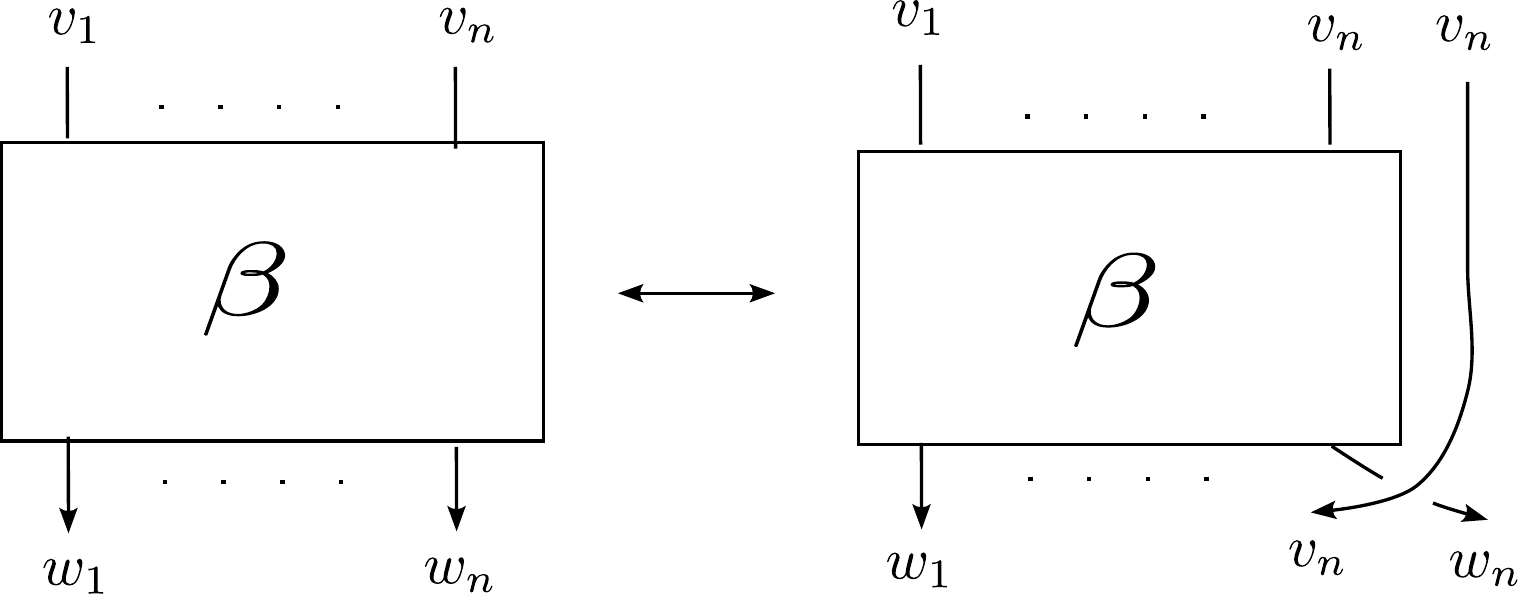
 \caption{stabilisation}
 \label{figure:stabilisation}
 \end{figure}
 The braid ${}^v_w\beta$ is an $n$-braid whereas the colored braid ${}^{v'}_{w'} \beta$  $ {}^{w'}_{w''}\sigma_{n}^{\pm}$ on the right hand side of the figure is an $n+1$ braid where $v'(n+1)=v(n), \\w'(n+1)=v(n), v'(i)=v(i)$ and $ w'(i)=w(i)$ for $i=1,\ldots,n $. Also $w''(n)=w'(n+1),w''(n+1)=w'(n)$ and $w''(j)=w'(j)$ for $j\neq n,n+1.$\\
 2. Conjugation\\
 The conjugation move transforms a colored braid ${}_{w}^{w}\beta$ into ${}_w^v\tau$  ${}_{w}^{w}\beta $  $\left( {}_w^v\tau \right)^{-1}$ and vice versa.
 The conjugation move for $\tau={}_w^v\sigma_{i}$ is illustrated in figure \ref{figure:conjugation}.
 \begin{figure}[h]
\def\svgwidth{150px}
\centering 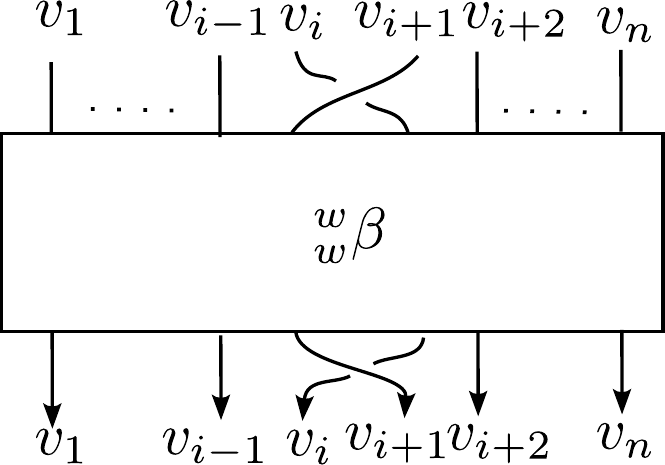
 \caption{Conjugation}
 \label{figure:conjugation}
 \end{figure}
 \begin{theorem}
     Markov Theorem for colored braids \cite{murakami1993state}: Let $\beta_{C}$ and $\beta'_{C'}$  be two colored braids whose colors at the top end coincide with the colors at the same position at the bottom. Then the closure of the colored braids $\beta_{C}$ and $\beta'_{C'}$ are the same if and only if there is a sequence of colored Markov moves connecting $\beta_{C}$ and $\beta'_{C'}$.
 \end{theorem}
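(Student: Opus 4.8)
The plan is to prove both implications by leveraging the classical Markov theorem together with the colored Reidemeister theorem and Alexander's theorem for multicolored braids established above. For the \emph{if} direction, I would verify directly that each colored Markov move leaves the colored link closure unchanged. For a conjugation ${}^w_w\beta \mapsto {}^v_w\tau\,{}^w_w\beta\,({}^v_w\tau)^{-1}$, closing the braid identifies the top and bottom endpoints, so the factor ${}^v_w\tau$ can be slid around the closure arcs to cancel its inverse; since $\tau$ and $\tau^{-1}$ carry identical strand colors, the colored closures coincide. For a stabilization, the added crossing ${}^{w'}_{w''}\sigma_n^{\pm}$ together with the new strand (colored by the conditions imposed on $v',w',w''$) becomes an isolated kink in the closure, i.e. a colored Reidemeister~I configuration; by the colored Reidemeister theorem this does not change the colored link type, and the prescribed colors guarantee the new strand joins up consistently.

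For the \emph{only if} direction, which is the substantive part, I would first forget the colors. If ${}^v_v\beta$ and ${}^{v'}_{v'}\beta'$ close to the same colored link $L_C$, then their underlying braids $\beta$ and $\beta'$ close to the same underlying link $L$ obtained by discarding colors. By the classical Markov theorem there is a finite sequence of classical conjugations and stabilizations carrying $\beta$ to $\beta'$, passing through intermediate braids $\beta = \beta^{(0)}, \beta^{(1)}, \ldots, \beta^{(N)} = \beta'$, each of whose closures is isotopic to $L$.

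The crux is to promote this uncolored sequence to a sequence of colored Markov moves. Each Markov move is realized by an explicit isotopy of the closures, and I would use these isotopies to transport the coloring of $L_C$ onto every intermediate closure: since each strand of $\beta^{(l)}$ lies on a unique component of the closure, and that component is identified with a fixed component of $L_C$, the strand inherits a well-defined color. This equips each $\beta^{(l)}$ with a coloring making it a colored braid of the form ${}^{u^{(l)}}_{u^{(l)}}\beta^{(l)}$, and in particular the conjugating braids acquire colorings turning them into valid morphisms ${}^{u}_{u'}\tau$ in $\mathcal{B}_{n}^{k}$. I would then check move by move that each classical conjugation and stabilization, read with these induced colorings, is exactly a colored conjugation or stabilization as defined above --- the required color constraints being automatically satisfied because all colors descend from the single colored link $L_C$ --- with any reordering of crossings absorbed by the braid relations and diagram isotopies already available in the groupoid $\mathcal{D}_{n}^{k}$.

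The main obstacle I anticipate is the bookkeeping in this last step: one must verify that the color assigned to each strand is genuinely well-defined and compatible across a stabilization, where a strand is created or destroyed and the component-to-strand correspondence changes, and that the conjugating braid produced by the classical argument can always be colored to match the endpoints of the braid it conjugates. Establishing this compatibility --- essentially that the colored structure is locally constant along strands and transports rigidly under the Markov isotopies --- is where the argument must be carried out with care, and it is what upgrades the classical theorem to its colored refinement.
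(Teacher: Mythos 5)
First, a point of comparison: the paper does not prove this theorem at all --- it is quoted as a known result from \cite{murakami1993state} --- so your proposal has to stand on its own rather than be matched against an argument in the paper. Your \emph{if} direction (each colored conjugation and stabilization preserves the colored closure, the latter being a colored Reidemeister~I kink) is correct and routine.

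The genuine gap is in the \emph{only if} direction, at exactly the point you describe as ``bookkeeping'' and then declare automatic. Applying the classical Markov theorem to the uncolored braids $\beta$ and $\beta'$ gives a sequence of moves, and transporting the coloring $C$ along the canonical isotopies attached to these moves does yield well-defined colorings of every intermediate braid; that part is fine. But at the end of the sequence you arrive at $\beta'$ equipped with some coloring $C''$, namely the pushforward of $C$ along the Markov-move isotopies, whereas you must arrive at $\beta'_{C'}$, whose coloring is the pushforward of $C$ along the \emph{given} color-preserving isotopy of closures. These two isotopies need not induce the same correspondence between components of $\overline{\beta'}$: they differ by a self-isotopy of the underlying link permuting its components, so in general $C'' \neq C'$. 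Your assertion that ``the required color constraints are automatically satisfied because all colors descend from the single colored link $L_C$'' fails precisely here --- the colors descend along the Markov isotopies, which know nothing about the colored isotopy you started from. What is then left to show is that $\beta'_{C''}$ and $\beta'_{C'}$, the same underlying braid with two colorings whose closures are isotopic as colored links, are connected by colored Markov moves; but that is an instance of the very theorem being proved, so the reduction is circular as it stands. This is why proofs of colored or labeled Markov theorems (as in \cite{murakami1993state}) re-run the proof of the classical theorem keeping track of colors throughout, rather than invoking it as a black box. It is telling that the paper's own source contains a commented-out remark attempting exactly your argument, abandoned with the admission that the terminal coloring ``may not necessarily be the coloring $C'$.''
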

 \section{Topological multi-quandles}
 In \cite{rubinsztein2007topological}, topological quandles were used to construct classical link invariants. In this section, we 
 define topological multi-quandles and use them to associate a topological space with a colored link, which turns out to be a colored link invariant.
 \begin{definition}
     A $k$-quandle $\left(X,\triangleright_{i}\right)_{i=1}^{k}$ is said to be a topological $k$-quandle if $\triangleright_{i}^{\pm}$ is continous for $i=1,\ldots,k$.
 \end{definition}
 Loosely, we use the term topological multi-quandles if $k$ is not explicitly mentioned. Note that a topological 1-quandle is a topological quandle where a topological quandle is understood as defined in \cite{rubinsztein2007topological}.
 \begin{example}
 Let $\left( X,\triangleright \right)$ be a topological quandle. Then $\left( X,\triangleright_{i}\right)_{i=1}^{k}$ where $\triangleright_{i}=\triangleright^{\pm}$ for $i=1,\ldots,k$ is a topological $k$-quandle.
 \end{example}
 \begin{example}
 Let $\left( X,\triangleright \right)$ be a topological quandle. Define $x \triangleright_{i_{j}}y:=\left( \ldots \left( \left( x \triangleright y \right)\triangleright y \right) \ldots y \right)$ where $y$ is operated over $x,$ $i_{j}$ number of times. Then $\left( X,\triangleright_{i_{j}}\right)_{j=1}^{k}$ where $i_{j}\in \mathbb{N}$ is a topological $k$-quandle. 
 \end{example}
 \begin{example}
 The binary operation $x \triangleright_{t_{j}} y= t_{j} x+\left( 1-t_{j} \right) y$ for $j=1,\ldots ,k,$ $t_{j}\in \mathbb{R}\setminus \lbrace 0\rbrace$ gives a topological $k$-quandle structure $\left( \mathbb{R},\triangleright_{t_{j}}\right)_{j=1}^{k}$.
     
 \end{example}
 \textbf{Fundamental $k$-quandle of a $k$-colored link}:
 Let $D_{C}$ denote the diagram of a $k$-colored link $L_{C}$ with $n$ arcs, whose arcs are labeled by the set $Y=\lbrace x_{1},x_{2},\ldots,x_{n} \rbrace.$ By a $k-quandle$ $word$ in $Y$, we mean a string of elements of $Y$, $\triangleright_{j}$s and parentheses similar to words in group theory. Then the $fundamental$ $k-quandle$ of $L_{C}$ denoted by $F \left( D_{C} \right) $ is the set of equivalence classes of the $k$-quandle words in $Y$ modulo the crossing relations and $k$-quandle axioms.
 \begin{theorem}\label{thm:fundamental}
 The fundamental $k$-quandle is an invariant of\\ $k$-colored links.
 \end{theorem}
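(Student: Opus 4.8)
The plan is to prove Theorem \ref{thm:fundamental} by showing that $F(D_C)$ is independent of the chosen diagram $D_C$ representing the $k$-colored link $L_C$. By the colored Reidemeister theorem, any two diagrams of equivalent $k$-colored links are connected by a finite sequence of colored Reidemeister moves together with diagram isotopy; the latter permutes arcs and crossings without altering their colors or the crossing relations, so it leaves $F(D_C)$ untouched. Hence it suffices to exhibit, for each of the three colored Reidemeister moves, an isomorphism of $k$-quandles between the fundamental $k$-quandle of the diagram before the move and that of the diagram after it. Since $F(D_C)$ is given by the presentation with one generator per arc and one relation per crossing, each such isomorphism is a Tietze-type argument: the arcs created or destroyed by a move correspond to generators that can be eliminated using the crossing relations, and the modified crossing relations must be shown to follow from the retained relations together with the $k$-quandle axioms of Definition \ref{defn:k-quandle}.

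For the first move (figure \ref{figure:R1&R2}) a strand of color $j$ crosses over itself, producing a crossing relation $z = x \triangleright_j x$ or $z = x \triangleright_j^{-1} x$; because each $(X,\triangleright_j)$ is a quandle, idempotency gives $x \triangleright_j x = x$, so $z = x$, the new arc is identified with the old one, and its generator is eliminated, leaving the presentation unchanged. For the second move one strand of color $j$ passes twice over another: the middle arc of the under-strand is a new generator $w$ with $w = x \triangleright_j y$ at the first crossing, and the outgoing arc satisfies $z = w \triangleright_j^{-1} y$ at the second. Invertibility of the translation $\beta_j$ in the quandle $(X,\triangleright_j)$ then yields $z = (x \triangleright_j y)\triangleright_j^{-1} y = x$, which again eliminates the new generator and collapses the two crossing relations to an identity. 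The sign and orientation variants of these two moves are handled in exactly the same way, using whichever of $\triangleright_j$ or $\triangleright_j^{-1}$ occurs.

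The decisive case is the third move (figure \ref{figure:R3}), involving three strands whose colors are fixed by the move. Labelling the incoming arcs and reading off the crossing relations on each side, the outermost arcs match automatically, while the equality of the two expressions obtained for the remaining arc is precisely the mixed self-distributivity relation $(x \triangleright_i y)\triangleright_j z = (x \triangleright_j z)\triangleright_i (y \triangleright_j z)$, where $i$ and $j$ are the colors of the relevant over-strands. This is exactly condition (2) of Definition \ref{defn:k-quandle}, and the variants of R3 in which some crossings are negative are covered by the inverse-operation identities established in the two theorems preceding this section. I expect this last case to be the main obstacle, the difficulty being purely bookkeeping: one must verify that every operation is indexed by the color of its over-strand (preserved by the move) and that each distribution of crossing signs in R3 matches one of the available mixed self-distributivity identities. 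Once the colors are correctly assigned, move-invariance reduces cleanly to the $k$-quandle axioms, and combining the three cases shows that $F(D_C)$ is well-defined up to isomorphism, hence an invariant of $k$-colored links.
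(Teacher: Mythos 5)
Your proposal is correct and follows essentially the same route as the paper's own proof: invariance is checked against the three colored Reidemeister moves, with R1 handled by idempotency, R2 by invertibility of $\beta_y$ (i.e.\ $(x\triangleright_j y)\triangleright_j^{-1}y=x$), and R3 by the mixed self-distributivity condition (2) of Definition \ref{defn:k-quandle}. Your version is in fact more careful than the paper's sketch --- the Tietze-style elimination of generators and the explicit treatment of negative-crossing variants of R3 via the inverse-operation identities are only implicit there --- but the underlying argument is the same.
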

 \begin{proof}
 Consider figures \ref{figure:R1&R2} and \ref{figure:R3}.
 The condition that $x\triangleright_{i}x=x$ for a $k$-quandle implies that both the arcs in a twist in the first Reidemeister move should be labeled by the same element in  $F \left( D_{C} \right).$ Similarly, since $(x\triangleright_{i}y)\triangleright_{i}^{-1}y=x$, both the arcs at the end of the under strand that slides below the over strand in the second Reidemeister move should also be labeled by same element in  $F \left( D_{C} \right).$ By the second condition of Definition \ref{defn:k-quandle}, the labelings of the ending arc of the first strand in the third Reidemeister move before and after the move should also be the same. Therefore, the fundamental $k$-quandle of a $k$-colored link is invariant under colored Reidemeister moves. 
 \end{proof}
 \begin{definition}
     Let $X$ and $Y$ be (topological) $k$-quandles. A (continuous) map $f:X \rightarrow Y$ is said to be a (topological) $k$-quandle homomorphism if $f(x \triangleright_{i} y)=f(x) \triangleright_{i} f(y)$ for every $i=1,\ldots,k$
 \end{definition}
 Let $Q$ be a topological $k$-quandle, $L_{C}$ be a $k$-colored link and $D_{C}$ be its diagram. Let $\text{Hom}(F(D_{C}), Q)$ be the set of topological $k$-quandle homomorphisms from $F(D_{C})$ to $Q$ with compact open topology and $F(D_{C})$ having the discrete topology. 
 \begin{corollary}\label{cor:multi}
 $\text{Hom}(F(D_{C}),Q)$ is a $k$-colored link invariant.
 \end{corollary}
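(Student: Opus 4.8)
The plan is to deduce the corollary directly from Theorem~\ref{thm:fundamental} by exploiting the functoriality of the $\text{Hom}(-,Q)$ construction. Suppose $L_C$ and $L_C'$ are equivalent $k$-colored links with diagrams $D_C$ and $D_C'$. By Theorem~\ref{thm:fundamental} their fundamental $k$-quandles are isomorphic as $k$-quandles; fix such an isomorphism $\phi\colon F(D_C)\to F(D_C')$. I would then show that the homeomorphism type of $\text{Hom}(F(D_C),Q)$ depends only on the isomorphism class of the fundamental $k$-quandle, which yields the asserted invariance.

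First I would define the pullback map $\phi^{*}\colon \text{Hom}(F(D_C'),Q)\to \text{Hom}(F(D_C),Q)$ by $\phi^{*}(f)=f\circ\phi$. Because $\phi$ is a $k$-quandle homomorphism and $f$ is a topological $k$-quandle homomorphism, the composite satisfies $(f\circ\phi)(x\triangleright_{i}y)=(f\circ\phi)(x)\triangleright_{i}(f\circ\phi)(y)$ for every $i=1,\ldots,k$; and since $F(D_C)$ carries the discrete topology, $f\circ\phi$ is automatically continuous. Hence $\phi^{*}$ is well defined, and $(\phi^{-1})^{*}$ is a two-sided inverse for it, so $\phi^{*}$ is a bijection.

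Next I would upgrade this bijection to a homeomorphism. Since $F(D_C)$ and $F(D_C')$ are discrete, their compact subsets are precisely the finite subsets, so the compact-open topology on each $\text{Hom}$ space coincides with the subspace topology inherited from the product (pointwise-convergence) topology on $Q^{F(D_C)}$ and $Q^{F(D_C')}$ respectively. On these product spaces, precomposition with the bijection $\phi$ amounts to a relabeling of the index set, which is a homeomorphism of products; restricting to the $\text{Hom}$ subspaces shows that $\phi^{*}$ is a homeomorphism. Consequently $\text{Hom}(F(D_C),Q)\cong \text{Hom}(F(D_C'),Q)$ as topological spaces, and the topological space $\text{Hom}(F(D_C),Q)$ is a $k$-colored link invariant.

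The only delicate point is the last step, identifying the compact-open topology with the product topology so that the relabeling argument applies; this is where I would take care to spell out the subbasis. However, this identification is immediate once one observes that the fundamental $k$-quandle is discrete, so no genuine obstacle arises and the corollary follows formally from Theorem~\ref{thm:fundamental}.
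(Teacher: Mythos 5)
Your proof is correct and takes essentially the same route as the paper, whose entire proof is the remark that the corollary ``directly follows from Theorem~\ref{thm:fundamental}''; your argument simply makes that deduction explicit by pulling back along the isomorphism of fundamental $k$-quandles and noting that, over a discrete domain, the compact-open topology coincides with the product topology so the pullback is a homeomorphism. These filled-in details are accurate and consistent with how the paper itself treats the compact-open topology in Lemma~\ref{lemma:homm}.
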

 \begin{proof}
 The corollary directly follows from theorem \ref{thm:fundamental}. 
 \end{proof}
 Let  ${}^v_w\sigma_{i}$ be a $k$-colored elementary $n$-braid where 
 $v=\left( v_{j}\right) $ and $w=\left( w_{j}\right) $ with 
$w_{k}=v_{k}$ for $k\neq i,i+1$, $w_{i}=v_{i+1}$ and $w_{i+1}=v_{i}.$ Then ${}^v_w\sigma_{i}$ induces a homeomorphism $${}^v_w\sigma_{i}:Q^{n} \rightarrow Q^{n}$$
  \begin{equation*}
   ({}^v_w\sigma_{i}(x))_{j}=
  \begin{cases}
  x_{j} & \text{if}\: j \neq i,i+1\\
  x_{i+1} & \text{if}\: j=i\\
  x_{i}\triangleright_{v_{i+1}} x_{i+1} & \text{if} \: j = i+1
  \end{cases}   
 \end{equation*}

with $$\left({}^v_w\sigma_{i}\right)^{-1}={}^w_v\sigma_{i}^{-1}:Q^{n} \rightarrow Q^{n}$$
 \begin{equation*}
   (({}^v_w\sigma_{i})^{-1}(x))_{j}=
   \begin{cases}
      x_{j} & \text{if } j \neq i, i+1 \\
      x_{i+1}\triangleright^{-1}_{v_{i}} x_{i} & \text{if } j=i \\
      x_{i} & \text{if } j = i+1
   \end{cases}   
\end{equation*}
Since any $k$-colored $n$-braid ${}^{v_{1}}_{v_{m+1}}\sigma$ is a product of colored elementary braids, it induces a homeomorphism from $Q^{n}$ to $Q^{n}$, which is a composition of homeomorphisms corresponding to colored elementary braids. The homeomorphisms in product notation should be composed from left to right. In other words if ${}^{v^{1}}_{v^{m+1}}\sigma={}^{v^{1}}_{v^{2}}\sigma_{i_{1}}^{\pm}$  $ {}^{v^{2}}_{v^{3}}\sigma_{i_{2}}^{\pm}\ldots {}^{v^{m}}_{v^{m+1}}\sigma_{i_{m}}^{\pm}$ then the homeomorphism is  ${}^{v^{m}}_{v^{m+1}}\sigma_{i_{m}}^{\pm} \circ \cdots \circ {}^{v^{2}}_{v^{3}}\sigma_{i_{2}}^{\pm} \circ {}^{v^{1}}_{v^{2}}\sigma_{i_{1}}^{\pm}$. We define $\hat{J}_{Q}(L_{C})$ to be the space of fixed points of the map ${}^v_v\sigma: Q^{n} \rightarrow Q^{n}$ where $L_{C}=\overline{{}^v_v\sigma}$. This construction is similar to that of $J_{Q}(L)$ in \cite{rubinsztein2007topological}. 
\begin{lemma}\label{lemma:homm}
    There exists a homeomorphism between $\text{Hom}(F(D_{C}),Q)$ and  $\hat{J}_{Q}(L_{C})$ .
\end{lemma}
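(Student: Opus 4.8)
The plan is to realize both spaces as the set of $Q$-colorings of a braid-closure diagram sitting inside a finite power of $Q$. By Corollary \ref{cor:multi} the space $\text{Hom}(F(D_C),Q)$ depends only on $L_C$, so I would first replace $D_C$ by the diagram obtained from closing the colored braid ${}^v_v\sigma$, where $L_C=\overline{{}^v_v\sigma}$ is an $n$-strand closed braid. With this choice the generating set of $F(D_C)$ is the set of arcs of the closed-braid diagram, and I will single out the $n$ generators $a_1,\ldots,a_n$ that meet the top of the braid.

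Next I would identify $\text{Hom}(F(D_C),Q)$ with the space of $Q$-colorings of $D_C$. A $k$-quandle homomorphism $f\colon F(D_C)\to Q$ is determined by its values on the arcs, and an assignment of arcs to elements of $Q$ extends to a homomorphism if and only if it respects the crossing relations; the $k$-quandle axioms impose no extra condition because they already hold identically in $Q$. Since $F(D_C)$ carries the discrete topology and is finitely generated, its compact subsets are finite, so the compact-open topology on $\text{Hom}(F(D_C),Q)$ coincides with the topology of pointwise convergence on generators. Consequently the evaluation map $f\mapsto\bigl(f(\text{arc}_1),\ldots\bigr)$ is a homeomorphism of $\text{Hom}(F(D_C),Q)$ onto the coloring subspace of the corresponding finite power of $Q$, with its subspace (product) topology.

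Then I would parametrize colorings of the closed-braid diagram by the top colors. Reading the diagram from top to bottom, the color of every arc is determined by the tuple $x=(f(a_1),\ldots,f(a_n))\in Q^n$ via the elementary crossing relations: passing $x$ through the braid applies exactly the maps ${}^v_w\sigma_i^{\pm}$ defined before the lemma, so the tuple of bottom colors equals ${}^v_v\sigma(x)$. The closure joins the bottom of each position $j$ to the top of position $j$ along arcs containing no crossing, so the coloring is globally consistent precisely when ${}^v_v\sigma(x)_j=x_j$ for all $j$, i.e. when $x$ is a fixed point. Hence the projection to the top strands identifies the coloring space with $\hat{J}_Q(L_C)=\{\,x\in Q^n:{}^v_v\sigma(x)=x\,\}$.

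Composing the two identifications yields a continuous bijection $\Phi\colon\text{Hom}(F(D_C),Q)\to\hat{J}_Q(L_C)$, $\Phi(f)=(f(a_1),\ldots,f(a_n))$, and the final step is to verify it is a homeomorphism. Continuity of $\Phi$ is just evaluation at the generators $a_1,\ldots,a_n$. Continuity of $\Phi^{-1}$ is the point that uses the topological hypothesis: recovering the full coloring, and hence the homomorphism $f$, from the top tuple $x$ amounts to composing the maps $\triangleright_i^{\pm}$ prescribed by the successive crossings, and these are continuous precisely because $Q$ is a topological $k$-quandle; in fact each ${}^v_w\sigma_i^{\pm}$ is a homeomorphism of $Q^n$, so their composite is as well. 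The step deserving the most care, and the main obstacle, is exactly this topological matching: checking that the compact-open topology on $\text{Hom}(F(D_C),Q)$ coincides with the product topology used to define $\hat{J}_Q(L_C)$, and that the expansion of a top tuple into a full coloring is continuous with continuous inverse. The underlying combinatorial bijection among homomorphisms, colorings, and braid fixed points is then a direct bookkeeping of the crossing relations.
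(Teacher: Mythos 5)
Your proposal is correct and takes essentially the same route as the paper's own proof: the homeomorphism is evaluation at the top arcs of a braid-closure diagram, the inverse extends a fixed-point tuple to a full coloring and hence a homomorphism, continuity of the forward map comes from subbasis elements $\mathcal{B}(\lbrace y_i\rbrace, U)$ of the compact-open topology, and continuity of the inverse comes from the fact that compact subsets of the discrete $F(D_C)$ are finite and each $k$-quandle word defines a continuous map $Q^n\to Q$. Your extra step of invoking Corollary \ref{cor:multi} to pass from $D_C$ to the closed-braid diagram, and of routing through the coloring space, is a minor repackaging that the paper itself notes in the remark following Theorem \ref{thm:invariant}.
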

\begin{proof}
 Define a map $\phi:\text{Hom}(F(\overline{{}^v_v\sigma}),Q) \rightarrow \hat{J}_{Q}(L_{C})$ as $$\phi(f):=\left( f(y_{1}),f(y_{2}),\ldots,f(y_{n})\right)$$ where $y_{1},y_{2},\ldots,y_{n}$ denotes the labels of top arcs of the $n$-braid ${}^v_v\sigma$ from left to right. To show the map $\phi$ is continuous it's enough to show that the $i^{th}$ coordinate function
 $$ f \xrightarrow{\phi_{i}}f(y_{i})$$ is continuous. But if $f(y_{i}) \in U$ then $\phi_i^{-1}(U)=\mathcal{B}(\lbrace y_{i} \rbrace ,U)$, where 
 $\mathcal{B}(\lbrace y_{i} \rbrace ,U)$ is a sub basis element of $\text{Hom}(F(\overline{{}^v_v\sigma}),Q)$, whose topology is compact open topology.
 Now define $$\phi^{-1}:\hat{J}_{Q}(L_{C}) \rightarrow \text{Hom}(F(\overline{{}^v_v\sigma}),Q)$$ as
 $$(a_{1},a_{2},\ldots,a_{n}) \xrightarrow{\phi^{-1}} g $$ where $g(y_{i}):=a_{i}$ for $i=1,2,\ldots,n.$ As $(a_{1},a_{2},\ldots,a_{n}) \in \hat{J}_{Q}(L_{C})$, $g$ could be defined on all arcs of $\overline{{}^v_v\sigma}$ and extended over $F(\overline{{}^v_v\sigma}).$ 
 Now any compact set $K$ of $F(\overline{{}^v_v\sigma})$ will be of the form $\lbrace  w_{1},w_{2},\ldots,w_{m} \rbrace$ where each $w_{i}$ is a $k$-quandle word in $F(\overline{{}^v_v\sigma})$ with generators $y_{1},y_{2},\ldots,y_{n}$. Also, $w_{i}$ can be seen as a function from $Q^{n}$ to $Q$ which maps $(a_{1},a_{2},\ldots, a_{n})$ to $w_{i}(a_{1},a_{2},\ldots, a_{n})$ where $w_{i}(a_{1},a_{2},\ldots, a_{n})$ is the element in quandle $Q$ obtained by substituting $a_{j}$s instead of $y_{j}$s in $w_{i}.$ The function $w_{i}$ is continuous as the quandle operations are continuous.
 Note that
 \begin{equation*}
\begin{split}
\mathcal{B}(K,U) &= \left\lbrace f \in \text{Hom}(F(\overline{{}^v_v\sigma}),Q) \;\middle|\; f(w_{i}) \in U, \;\forall i \in \{1,2,\ldots,m\} \right\rbrace \\
&= \bigcap_{i \in \{1,2,\ldots,m\}}\left\lbrace f \in \text{Hom}(F(\overline{{}^v_v\sigma}),Q) \;\middle|\; f(w_{i}) \in U \right\rbrace
\end{split}
\end{equation*}

 Then
\begin{equation*}
\begin{split}
&\left( \phi^{-1} \right)^{-1}(\mathcal{B}(K,U)) \\
&= \left\lbrace (a_{1},a_{2},\ldots ,a_{n}) \in  \hat{J}_{Q}(L_{C}) \, \middle| \, \begin{array}{@{}l@{}}w_{i}(a_{1},a_{2},\ldots, a_{n}) \in U, \\ \forall i \in \{1,2,\ldots,m\}\end{array} \right\rbrace \\
&= \bigcap_{i=1}^{m}\left\lbrace (a_{1},a_{2},\ldots ,a_{n}) \in  \hat{J}_{Q}(L_{C}) \, \middle| \, w_{i}(a_{1},a_{2},\ldots, a_{n}) \in U \right\rbrace \\
&= \bigcap_{i=1}^{m} w_{i}^{-1}(U) \cap \hat{J}_{Q}(L_{C})
\end{split}
\end{equation*}
 Therefore $\phi^{-1}$ is continuous as $\left( \phi^{-1} \right)^{-1}( \mathcal{B}(K,U)) $ is an open set.
 Note that $\phi \circ \phi^{-1}=I_{\hat{J}_{Q}(L_{C})}$ and $\phi^{-1} \circ \phi=I_{\text{Hom}(F(\overline{{}^v_v\sigma}),Q)}.$ Therefore $\phi$ is a homeomorphism.
\end{proof}
The following theorems are analogues of theorems in \cite{rubinsztein2007topological}.
\begin{theorem}\label{thm:invariant}
 Given a topological multi-quandle $Q$, the space of fixed points $\hat{J}_{Q}(L_{C})$ is a multicolored link invariant.    
\end{theorem}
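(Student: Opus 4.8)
The plan is to deduce the statement from the two results established immediately beforehand, namely Lemma~\ref{lemma:homm} and Corollary~\ref{cor:multi}, rather than re-verifying invariance move by move. The only genuine content to establish is that the space $\hat{J}_{Q}(L_{C})$, which a priori depends on the chosen braid presentation $L_{C}=\overline{{}^v_v\sigma}$ (and hence on the number of strands $n$, on the word in the elementary braids, and on the coloring $v$), in fact depends up to homeomorphism only on the colored link $L_{C}$.

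First I would fix a braid presentation $L_{C}=\overline{{}^v_v\sigma}$ and let $D_{C}=\overline{{}^v_v\sigma}$ be the associated closed-braid diagram. By Lemma~\ref{lemma:homm} the evaluation map is a homeomorphism
\[
\hat{J}_{Q}(L_{C}) \;\cong\; \mathrm{Hom}\bigl(F(D_{C}),Q\bigr),
\]
so the fixed-point space is realized, up to homeomorphism, as the space of topological $k$-quandle homomorphisms out of the fundamental $k$-quandle of the diagram. Next I would invoke Corollary~\ref{cor:multi}, which, resting on the diagram-invariance of $F(D_{C})$ proved in Theorem~\ref{thm:fundamental}, asserts that $\mathrm{Hom}(F(D_{C}),Q)$ is a colored link invariant: if $D_{C}$ and $D'_{C}$ are two diagrams of the same colored link then $F(D_{C})$ and $F(D'_{C})$ are isomorphic $k$-quandles, and since $\mathrm{Hom}(-,Q)$ with the compact--open topology on the discrete source is functorial, the induced map on homomorphism spaces is a homeomorphism. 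Chaining the two steps, any two braid presentations of $L_{C}$ give fixed-point spaces that are each homeomorphic to $\mathrm{Hom}(F(D_{C}),Q)$ for their respective diagrams, and the latter is determined up to homeomorphism by $L_{C}$ alone; hence the two fixed-point spaces are homeomorphic. This yields the invariance.

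There is essentially no obstacle along this route, since the substance has already been absorbed into Lemma~\ref{lemma:homm} and Corollary~\ref{cor:multi}. A more self-contained alternative, paralleling \cite{rubinsztein2007topological}, would instead verify directly that $\hat{J}_{Q}$ is preserved by the two colored Markov moves and then appeal to the Markov theorem for colored braids. Conjugation is routine: replacing ${}^v_v\sigma$ by $\tau\,{}^v_v\sigma\,\tau^{-1}$ conjugates the induced self-map of $Q^{n}$ by the homeomorphism of $Q^{n}$ that $\tau$ induces, and a conjugated self-map has fixed-point set carried homeomorphically onto the new one. The delicate step, and the one I would expect to be the main obstacle, is stabilization, where the number of strands jumps from $n$ to $n+1$ and one must show that the fixed-point subspace of $Q^{n+1}$ cut out by the stabilized braid is homeomorphic to that of $Q^{n}$; here the new coordinate is forced, through the crossing relation introduced by the added $\sigma_{n}^{\pm}$, to be a continuous function of the remaining coordinates, so that projection onto the first $n$ coordinates should be the required homeomorphism. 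Verifying this compatibility carefully with respect to the compact--open topology is where the real work of the direct approach would concentrate, which is precisely why routing through the already-proven homeomorphism of Lemma~\ref{lemma:homm} is preferable.
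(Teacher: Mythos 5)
Your proposal is correct, but it takes a genuinely different route from the paper. The paper proves Theorem~\ref{thm:invariant} directly: it verifies invariance of the fixed-point space under the two colored Markov moves --- conjugation is dispatched by noting that conjugate homeomorphisms of $Q^{n}$ have homeomorphic fixed-point sets, and stabilization by an explicit coordinate computation showing that $(x_{1},\ldots,x_{n})$ is fixed by ${}^{v}_{v}\beta$ if and only if $(x_{1},\ldots,x_{n},x_{n})$ is fixed by ${}^{v'}_{v'}\beta\;{}^{v'}_{w'}\sigma_{n}^{\pm}$ (the added crossing forces $x_{n+1}=x_{n}$) --- and then the colored Markov theorem finishes the argument. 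You instead chain Lemma~\ref{lemma:homm} with Corollary~\ref{cor:multi}, so that well-definedness of $\hat{J}_{Q}$ across braid presentations is reduced to the Reidemeister-move invariance of the fundamental $k$-quandle (Theorem~\ref{thm:fundamental}); the only detail you must supply, and do supply, is that an isomorphism of discrete fundamental $k$-quandles induces a homeomorphism of $\mathrm{Hom}(-,Q)$ spaces in the compact--open topology. What each approach buys: yours avoids both the colored Markov theorem and any fixed-point computation, concentrating all topological content in the already-proven Lemma~\ref{lemma:homm}, at the price of invoking the colored Reidemeister theorem instead; the paper's proof is self-contained at the level of braids and makes the stabilization mechanism explicit, which is the computation your ``alternative'' sketch correctly anticipates. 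Notably, the remark immediately following the theorem in the paper records precisely your chain, $S_{Q}(D_{C})=\mathrm{Hom}(F(D_{C}),Q)=\mathrm{Hom}(F(\overline{\beta}_{C}),Q)=\hat{J}_{Q}(L_{C})$, so your route is one the authors themselves endorse, just not the one used in their proof.
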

\begin{proof}
    We want to prove that $\hat{J}_{Q}(L_{C})$ is invariant under the colored Markov moves.\\
$(1)$ The maps ${}_{v}^{v}\sigma$ and ${}_v^w\tau$ ${}_{v}^{v}\sigma $ $\left( {}_v^w\tau \right)^{-1}$ have homeomorphic space of fix points as conjugate homeomorphisms have homeomorphic fixed point spaces. \\
$(2)$ The maps ${}^{v'}_{v'} \beta $ ${}^{v'}_{w'}\sigma_{n}^{\pm}$  and  ${}^v_v \beta$ have homeomorphic space of fix points where $v'(n+1)=v(n), w'(n+1)=v'(n), w'(n)=v'(n+1)$ and \\$v'(i)=v(i)$ for $i=1,\ldots, n$ and $w'(i)=v'(i)$ for $i=1,\ldots, n-1$.\\
If $(x_{1},x_{2},\ldots,x_{n}) \in Q^{n}$ is a fixed point of ${}^v_v \beta$ then \\$(x_{1},x_{2},\ldots,x_{n},x_{n}) \in Q^{n+1}$ is a fixed point of ${}^{v'}_{v'} \beta \;{}^{v'}_{w'}\sigma_{n}^{\pm}$.\\
Conversely suppose $(x_{1},x_{2},\ldots,x_{n+1}) \in Q^{n+1}$ is a fixed point of $ {}^{v'}_{v'} \beta \; {}^{v'}_{w'}\sigma_{n}$ then
 \begin{equation*}
    \begin{split}
     {}^{v'}_{v'}\beta\;{}^{v'}_{w'}\sigma_{n} (x_{1},x_{2},\ldots,x_{n},x_{n+1}) = {}^{v'}_{w'}\sigma_{n}\;({}^{v'}_{v'} \beta\;(x_{1},x_{2},\ldots,x_{n}),x_{n+1})\\
     \end{split}
 \end{equation*}
 Suppose ${}^{v'}_{v'} \beta\;(x_{1},x_{2},\ldots,x_{n})=(y_{1},y_{2},\ldots,y_{n}),$ then
 \begin{equation*}
    \begin{split}
{}^{v'}_{v'}\beta \; {}^{v'}_{w'}\sigma_{n} (x_{1},x_{2},\ldots,x_{n},x_{n+1})   &={}^{v'}_{w'}\sigma_{n}(y_{1},y_{2},\ldots,y_{n},x_{n+1})\\
    &=(y_{1},y_{2},\ldots,y_{n-1},x_{n+1},y_{n}\triangleright_{v_{n}} x_{n+1})\\
    &=(x_{1},x_{2},\ldots,x_{n-1},x_{n},x_{n+1})
    \end{split}
    \end{equation*}
   
It follows that 
\begin{equation*}
\begin{split}
y_{i} &= x_{i} \; \text{for} \; i=1,2,\ldots ,n-1 \\
x_{n+1} &= x_{n}\\
y_{n} &= x_{n+1}
\end{split}
\end{equation*}
Therefore $(x_{1},x_{2},\ldots,x_{n})\in Q^{n}$ is a fixed point of ${}^{v}_{v} \beta$ if and only if $(x_{1},x_{2},\ldots,x_{n},x_{n})\in Q^{n+1}$ is a fixed point of $ {}^{v'}_{v'} \beta \; {}^{v'}_{w'}\sigma_{n}$.
\end{proof}
\begin{remark}
For a diagram $D_{C}$ of a $k$-colored link $L_{C}$ with $n$ arcs, we define the $\textit{space of colorings of}$ $D_{C}$ by a topological $k$-quandle $Q$ to be set of colorings of $D_{C}$ whose elements are $n$-tuples in $Q$ equipped with a subspace topology of $Q^{n}.$ We denote this space as $S_{Q}(D_{C})$. There exists a homeomorphism between $S_{Q}(D_{C})$ and $\text{Hom}(F(D_{C}),Q)$. The reasoning is similar to the proof of lemma \ref{lemma:homm}. Let $\beta_{C}$ be a $k$-colored braid representing $L_{C}$. Then $S_{Q}(D_{C})=\text{Hom}(F(D_{C}),Q)=\text{Hom}(F(\overline{\beta}_{C}),Q)=\hat{J}_{Q}(L_{C}).$
\end{remark}
\begin{theorem} \label{thm:disjoint links}
    The invariant space $\hat{J}_Q(L^1_C \amalg L^2_{C'})$ of  $L^1_C \amalg L^2_{C'}$, which is the disjoint sum of $k$-colored links $L^1_C$ and $L^2_{C'}$ is homeomorphic to $\hat{J}_Q(L^1_C) \times \hat{J}_Q(L^2_{C'})$, which is the product of invariant spaces of $L^1_C$ and $L^2_{C'}$.

\end{theorem}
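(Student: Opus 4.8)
The plan is to compute $\hat{J}_Q$ on both sides using a convenient braid representative of the disjoint union and then reduce everything to the elementary fact that the fixed-point set of a product map is the product of the fixed-point sets. By Alexander's theorem for multicolored braids, write $L^1_C = \overline{{}^v_v\sigma}$ for a $k$-colored $n$-braid ${}^v_v\sigma$ and $L^2_{C'} = \overline{{}^w_w\tau}$ for a $k$-colored $m$-braid ${}^w_w\tau$. Placing the two braid diagrams side by side without introducing any crossing between them produces a $k$-colored $(n+m)$-braid ${}^{(v,w)}_{(v,w)}\gamma$ whose closure is exactly $L^1_C \amalg L^2_{C'}$. Since $\hat{J}_Q$ is a colored link invariant by Theorem \ref{thm:invariant}, I am free to evaluate it on this representative.

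Next I would identify $Q^{n+m}$ with $Q^n \times Q^m$ and show that the homeomorphism induced by $\gamma$ is the product ${}^v_v\sigma \times {}^w_w\tau$. The braid $\gamma$ is a word in the elementary braids in which the letters coming from $\sigma$ involve only the generators $\sigma_1,\ldots,\sigma_{n-1}$ (acting on the first $n$ coordinates) while the letters coming from $\tau$ involve only $\sigma_{n+1},\ldots,\sigma_{n+m-1}$ (acting on the last $m$ coordinates), with $\sigma_n$ deliberately absent since there is no crossing between the two blocks. From the coordinate formula for ${}^v_w\sigma_i$, each such elementary homeomorphism is the identity on the coordinates outside its index block, so the two families commute and their composite acts as $(x,y) \mapsto \left({}^v_v\sigma(x), {}^w_w\tau(y)\right)$ for $x \in Q^n$ and $y \in Q^m$.

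Consequently the fixed-point set splits: $(x,y)$ is fixed by $\gamma$ if and only if $x$ is fixed by ${}^v_v\sigma$ and $y$ is fixed by ${}^w_w\tau$, so as sets $\hat{J}_Q(L^1_C \amalg L^2_{C'}) = \text{Fix}\left({}^v_v\sigma\right) \times \text{Fix}\left({}^w_w\tau\right) = \hat{J}_Q(L^1_C) \times \hat{J}_Q(L^2_{C'})$. To upgrade this set equality to a homeomorphism, I would invoke the standard point-set fact that for subspaces $A \subseteq X$ and $B \subseteq Y$ the subspace topology on $A \times B$ inherited from $X \times Y$ coincides with the product of the subspace topologies; applying this with $X = Q^n$ and $Y = Q^m$ yields the desired homeomorphism.

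The main obstacle is the careful bookkeeping in the second step: one must verify that the side-by-side juxtaposition is a legitimate colored braid whose closure really is the disjoint union, and that the reindexing of generators places the two blocks on complementary coordinate ranges so that the induced map is genuinely a product (in particular that $\sigma_n$ never appears). Once the product structure of the induced homeomorphism is established, the fixed-point identification and the subspace-topology argument are routine.
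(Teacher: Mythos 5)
Your proposal is correct and follows essentially the same route as the paper: represent each colored link as the closure of a colored braid, juxtapose the two braids so that the induced map on $Q^{n+m}$ is the product map, and observe that the fixed-point set of a product map is the product of the fixed-point sets. The only difference is that you spell out the generator bookkeeping and the subspace-topology step, which the paper leaves implicit.
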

\begin{proof}
Suppose $\gamma_{1}$ and $\gamma_{2}$ be two $k$-colored braids with $m$ and $n$ strands respectively, such that $\overline{\gamma_{1}}=L^{1}_{C}$ and $\overline{\gamma_{2}}=L^{2}_{C'}.$ Then $\hat{J}_{Q}(L^{1}_{C} \amalg L^{2}_{C'})$ is the space of fixed points of the map 

\begin{equation*}
\begin{aligned}
\gamma_{1}\gamma_{2} & : Q^{m+n} \rightarrow Q^{m+n} \\
(x_{1},\ldots,x_{n},x_{n+1},\ldots,x_{n+m}) & \mapsto (\gamma_{1}(x_{1},x_{2},\ldots,x_{n}),\gamma_{2}(x_{n+1},\ldots,x_{n+m})).
\end{aligned}
\end{equation*}

    Therefore $(x_{1},x_{2},\ldots,x_{n},x_{n+1},\ldots,x_{n+m})$ is a fixed point of $\gamma_{1}\gamma_{2}$ if and only if $(x_{1},x_{2},\ldots,x_{n})$ is a fixed point of $\gamma_{1}$ and $(x_{n+1},\ldots,x_{n+m})$ is a fixed point of $\gamma_{2}$. Now, the theorem follows.
\end{proof}
\subsection*{A dichromatic link invariant}
In \cite{lee2021diquandles}, it's shown that the cardinality of colorings of dichromatic links by a diquandle is a dichromatic link invariant. It's not difficult to see that for a dichromatic link with diagram $D_C$, $\vert S_{Q}(D_{C}) \vert$ which is the cardinality of space of colorings of $D_{C}$ by a topological $2$-quandle $Q$ is the aforementioned dichromatic link invariant. We use the term $topological$ $diquandle$ hereinafter instead of topological $2$-quandle as a topological $2$-quandle $X$ is a diquandle with a topology such that both the quandle operations and their inverses are continuous. 
\begin{example}
    Let $G$ be a cyclic topological group. Suppose $\sigma_{1}$ and $\sigma_{2}$ be continous automorphisms of $G$. Then, the quandle operations 
    $$ x\triangleright_{i}y=\sigma_{i}\left( y \right)^{-1}\sigma_{i}(x)y \; \text{for } i=1,2 $$
    makes $G$ a topological diquandle. The verification is a direct check keeping in mind that the group of automorphisms of $G$ is abelian.
\end{example}
\begin{example}
Consider the following two dichromatic links in figure \ref{figure:link}.
\begin{figure}[h]
\def\svgwidth{200px}
\centering 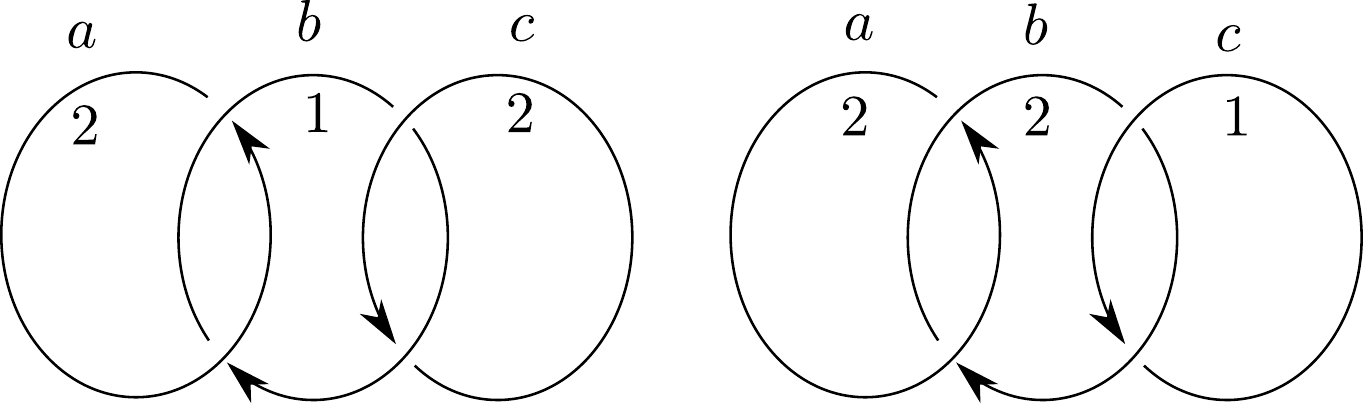
 \caption{Link with two colors}
 \label{figure:link}
 \end{figure}
 Let the topological diquandle be the unit sphere $S^{1}$ with $x\triangleright_{1} y=x$ and $x\triangleright_{2}y=2\langle x,y \rangle y -x$.\\
 The first figure satisfies the equations
 \begin{equation*}
 \begin{split}
     a \triangleright_{1} b=a \\
     c \triangleright_{1} b=c \\
     \left( b \triangleright_{2} c \right)\triangleright_{2} a =b
\end{split}
 \end{equation*}
 which will imply $c=\pm a.$ Therefore $\hat{J}_{Q}(L_{1})=\lbrace(a,b,c) \in S^{1} \times S^{1} \times S^{1} \mid c= \pm a \rbrace= T^{1} \sqcup T^{1}$.
The second dichromatic diagram satisfies the equations
 \begin{equation*}
 \begin{split}
     a \triangleright_{2} b=a \\
     c \triangleright_{2} b=c \\
     \left( b \triangleright_{1} c \right)\triangleright_{2} a =b
\end{split}
 \end{equation*}
 which will imply $a=\pm b$ and $c=\pm b.$ Therefore $\hat{J}_{Q}(L_{2})=\lbrace(a,b,c) \in S^{1} \times S^{1} \times S^{1} \mid a= \pm b,\; c= \pm b \rbrace= S^{1} \sqcup S^{1}\sqcup S^{1}\sqcup S^{1}.$
\end{example}

\section*{Acknowledgments} We would like to express our gratitude to Haritha N for her valuable contributions and insightful discussions on the topic.


\bibliography{ref}{}
\bibliographystyle{plain}
\medskip
\medskip

\end{document}